\newtheorem{theorem}{Theorem}[section]
\newtheorem{proposition}{Proposition}[section]
\newtheorem{lemma}{Lemma}[section]
\newtheorem{remark}{Remark}[section]
\newtheorem*{conjecture*}{Conjecture} %[theorem]
\newcommand{\N}{\mathbb{N}}
\newcommand{\R}{\mathbb{R}}
\newcommand{\wh}{\widehat}
\newcommand{\pa}{\partial}
\author{Klemens Fellner, Stefanie Sonner, Bao Quoc Tang, Do Duc Thuan}
\address{Klemens Fellner \hfill\break
Institute of Mathematics and Scientific Computing, University of Graz, Heinrichstrasse 36, 8010 Graz, Austria}
\email{klemens.fellner@uni-graz.at}
\address{Stefanie Sonner \hfill\break
Department of Mathematics, Radboud University Nijmegen, PO Box 9010, 6500 GL Nijmegen, The Netherlands}
\email{s.sonner@ru.nl}
\address{Do Duc Thuan \hfill\break
School of Applied Mathematics and Informatics, Hanoi University of Science and Technology, Dai Co Viet No. 1, Hanoi, Vietnam}
\email{thuan.doduc@hust.edu.vn}
\address{Bao Quoc Tang \hfill\break
Institute of Mathematics and Scientific Computing, University of Graz, Heinrichstrasse 36, 8010 Graz, Austria}
\email{quoc.tang@uni-graz.at} 
\title[Stabilisation by noise on the boundary]{Stabilisation by noise on the boundary for a Chafee-Infante equation with dynamical boundary conditions}
\begin{document}
\subjclass[2010]{34H15, 35K57, 35P15, 35R60}
\keywords{Stabilisation by noise; Chafee-Infante equation; dynamical boundary conditions; Poincar\'e-Trace inequaltiy}
\begin{abstract}
The stabilisation by noise on the boundary of the Chafee-Infante equation 
with dynamical boundary conditions subject to a multiplicative It\^o noise is studied. In particular, we show that there exists a finite range 
of noise intensities that imply the exponential stability of the trivial steady state. This differs from previous works on  
the stabilisation by noise of parabolic PDEs, where the noise acts inside the domain and
stabilisation typically occurs for an infinite range of noise intensities. 
To the best of our knowledge, this is the first result on the stabilisation of PDEs by  boundary noise.
\end{abstract}
\maketitle
\tableofcontents

%%%%%%%%%%%%%%%%
\section{Introduction}\label{sec:Intro}
%%%%%%%%%%%%%%%%
Let $D\subset\mathbb R^d$, $d \in\N$, be a bounded domain with smooth boundary $\partial D$
and denote by $Q_T = D\times (0,T)$, $S_T = \partial D\times (0,T)$ for any $T>0$. 
We consider the following stochastic Chafee-Infante 
equation with dynamical boundary conditions
\begin{equation}\label{eq}
	\begin{cases}
			d u + (- \Delta u + u^3 - \beta u)dt  = 0 &\text{in}\ Q_T,\\
			d u + (\partial_{\nu}u + \lambda u)dt = \alpha u\, dW_t &\text{on}\ S_T,\\
			u(x,0) = u_0(x), & x\in D,\\
			u(x,0) = \phi(x), & x\in \partial D,
	\end{cases}
\end{equation}
where $\beta>0$, $ \lambda>0$ and $\alpha \in \mathbb R$ are constants, and $\partial_\nu$ 
denotes the outward normal derivative on $\partial D$. Moreover, $W_t$ is a standard real-valued 
scalar Wiener process defined on the probability space $(\Omega, \mathcal F, P)$ with natural filtration 
$(\mathcal F_t)_{t\geq 0}$, $dW_t$ denotes the It\^ o differential and 
$(u_0,\phi)\in L^2(D)\times L^2(\partial D)$ are given initial data. 

The key feature of the model system~\eqref{eq} is the dynamical boundary condition in conjunction with the 
applied noise. There are many examples in the literature concerning partial differential equations with boundary noise
which are similar to \eqref{eq}. Let us mention an incomplete list of references concerning the well-posedness \cite{AB02,Bar11,Sow94}, stability \cite{AB02a}, or control \cite{DFT07,Mun17}.

More generally, parabolic equations with dynamical boundary conditions are used to model heat conduction 
in solids, for example when a solid is in contact with a well-stirred fluid at its surface, see e.g. \cite{CJ86}. For more results on derivation and analysis of dynamical boundary conditions, we refer the interested reader to e.g., \cite{Esc93,God06}.

\medskip

The main goal of this paper is to analyse whether a  \textit{multiplicative It\^o noise applied to} the boundary conditions will yield a stabilising effect for \eqref{eq}
compared to the deterministic problem $\alpha=0$.

\medskip
The stabilisation of PDEs by noise has been widely studied over the past decades, see e.g., \cite{CCLR07,CKS06,CLM01,Kwi99,Mao94}
and the references therein, as well as the extensive review \cite{CK09}.
Typically, the results strongly depend on the choice of interpretation, i.e. 
whether the SPDE is interpreted in the sense of It\^ o or Stratonovich. Moreover, also the type of noise is important, 
where mainly perturbations by additive and multiplicative noise are considered. 
Most works focus on the effect of a multiplicative It\^ o noise, less results have been obtained for Stratonovich 
noise, or SPDEs with additive noise.
Despite the extensive literature on the stabilisation of PDEs by noise, to the best of our knowledge, the stabilisation 
by a noise acting only on the boundary of a domain has not been addressed so far. 
This open problem and preliminary results were mentioned in  \cite{Car06, CK09}, but we are not aware of any published 
articles. 
This is the motivation for our work. As a first step in this direction we investigate the effect of a multiplicative 
It\^ o noise acting on the boundary of the domain on the Chafee-Infante 
equation with dynamical boundary conditions \eqref{eq}. More precisely, we derive sufficient conditions on the intensity of 
the noise $|\alpha|$, depending on $\beta,\lambda$ and the geometry of the domain that imply exponential 
stability of the trivial steady state solution $u\equiv 0.$ 

%\medskip
	
\medskip
Let us briefly describe the method and highlight the difficulties of the problem. 
With the use of classical Lebesgue- and Sobolev spaces $L^2(D)$,  $H^1(D)$ and  $H^{1/2}(\partial D)$ (see e.g. \cite{Adm}),
we introduce the Hilbert spaces
\begin{equation}\label{V0H}
		V_0 = H^1(D)\times H^{1/2}(\partial D), \qquad \quad H = L^2(D)\times L^2(\partial D)
\end{equation}
and 
\begin{equation}\label{V}
V=\{(u,T(u)): u\in H^ 1(D)\},
\end{equation}
where $T(u)=u|_{\partial D}$ denotes the trace operator $T\in\mathcal{L}(H^ 1(D);H^ {\frac{1}{2}}(\partial D)).$
Then, $V\subset V_0$ is a closed vector subspace, the embedding $V \hookrightarrow H$  is  dense and compact, and
$V \hookrightarrow H \hookrightarrow V^*$ is a Gelfand triple, 
where $V^*$ denotes the dual space of $V$.
The bilinear form 
$a: V\times V \to \mathbb R$ is defined by
	\begin{equation}\label{bilinear}
		a(U,\Phi) = \int_{D}\nabla u(x) \cdot \nabla \varphi(x) dx  + \lambda \int_{\partial D}u(x)|_{\partial D}\cdot \varphi|_{\partial D}(x) dS(x) \qquad \forall \, U,\Phi\in V,
	\end{equation}
where $U =( u, u|_{\partial D} )$ and $\Phi = 
( \varphi, \varphi|_{\partial D} )$. 
Since $a$ is symmetric, continuous, positive and coercive (due to the Poincar\'e-Trace inequality \eqref{crucial_ineq}, see Section \ref{sec:pre}), it defines a positive 
self-adjoint operator $A$ with compact resolvent  in $H$. 
Moreover,  introducing the operators $B: L^4(D)\times L^2(\pa D) \to  L^{\frac{4}{3}}(D)\times L^2(\pa D)$, 
$B(U) = (u^3 - \beta u, 0),$ 
and  $C:H\to H$, $C(U) = (0, \alpha u)$, 
we can rewrite problem \eqref{eq} in the abstract form
	\begin{align}\label{EqAb_0}
\begin{split}
dU + (AU + B(U))dt &= C(U)\, dW_t, \quad \text{ in } V^* + (L^{\frac 43}(D)\times L^2(\pa D))\\
U(0) &= (u_0, \phi)\in H.
\end{split}
	\end{align}
At this point, we remark that due to the ``degeneracy'' of the 
noise $C(U) = (0, \alpha u)$ which only acts on the second solution component, 
general results on the stabilisation by noise for abstract differential equations, e.g., see \cite{CLM01},
are not directly applicable to \eqref{EqAb_0}.
For the same reason, the stochastic system \eqref{eq} can also not be transformed into a random PDE, which would allow to 
apply deterministic methods, as done, e.g.  in 
\cite{CCLR07} for the stochastic Chafee-Infante equation with homogeneous Dirichlet boundary conditions.

Instead, to resolve these technical issues, we shall refine the method in \cite{CLM01}. 
The following Poincar\'e-Trace Inequality will be essential for our analysis: For any $\theta >0$, there exists an optimal constant $C_\theta^* >0$ such that 
\begin{equation}\label{crucial_ineq}
	\int_{D}|\nabla u(x)|^2dx + \theta\int_{\partial D}|u(x)|^2 dS(x) \geq C_{\theta}^*\int_{D}|u(x)|^2dx 
	\qquad \forall u\in H^1(D).
\end{equation}
The optimal constant $C_\theta^*$ is the first positive eigenvalue of the Laplacian $-\Delta$ in $D$ 
with Robin boundary conditions $\pa_\nu u + \theta u = 0$ on $\pa D$. To our knowledge, an explicit formula for $C_\theta^*$ is unknown. 
We remark that even in the limit $\theta \to \infty$, the constant $C_\theta^*$ remains bounded above by, for instance, the Poincar\'e constant for 
functions $u\in H_0^1(D)$, i.e. $C_\theta^* \leq \lambda_1$, where $\lambda_1$ denotes the first 
eigenvalue of the Laplacian $-\Delta$ in $D$ with homogeneous Dirichlet boundary conditions. In this work, 
in order to determine an explicit range of noise intensities for which stabilisation occurs, we derive an explicit expression for a sub-optimal
constant $C_\theta$ fulfilling \eqref{crucial_ineq}, which depends only on the dimension $d$ and the diameter of the domain $D$ (see Lemma \ref{TrIneq}). 

The first main result of this paper is Theorem \ref{pro}, which proves stabilisation by noise if the intensity $|\alpha|$ belongs to a specific and finite range. The proof uses the Poincar\'e-Trace inequality \eqref{crucial_ineq} to quantify the stabilising effect of the noisy dynamical boundary conditions to solutions of equation \eqref{eq}.
Our approach requires that the constant $\beta$ is strictly below a certain threshold. 
Though this restriction appears in our approach as a technical assumption, we conjecture due to the upper bound for the constant 
$C_\theta$ in \eqref{crucial_ineq}
that there indeed exists a critical value $\beta_{\mathrm{crit}}$ such that problem \eqref{eq} 
cannot be stabilised by noise on the boundary if $\beta > \beta_{\mathrm{crit}}$.

This is in contrast to the literature on the stabilisation by noise of parabolic SPDEs, where 
stabilisation typically occurs for an infinite range of noise intensities, i.e. whenever $|\alpha|$ is sufficiently large. It is an interesting open problem, whether the finite range of stabilising noise intensity is a characteristic of the stabilisation by noise on the boundary, 
or it is due to the imposed dynamical boundary conditions, or a technical
limitation of our method.

\medskip
To further investigate this question, we compare the boundary noise problem \eqref{eq}
to the following Chafee-Infante equation with a multiplicative It\^ o noise acting \textit{inside the domain}
and subject to noise free dynamical boundary conditions
	\begin{equation}\label{eq_0}
		\begin{cases}
			d u + (- \Delta u + u^3 - \beta u)dt  = \alpha u\, dW_t &\text{in}\ Q_T,\\
			d u + (\partial_{\nu}u + \lambda u)dt = 0 &\text{in}\ S_T,\\
			u(x,0) = u_0(x), & x\in D,\\
			u(x,0) = \phi(x), & x\in \partial D.
		\end{cases}
	\end{equation}
By applying a similar approach as for problem \eqref{eq}, we obtain again 
a finite range of noise intensities $|\alpha|$ that yields stabilisation of the equation. This might suggest that the dynamical 
boundary conditions \textit{per se} prevent the stabilisation of the Chafee-Infante equation by noise with too large intensities 
(if the noise acts either on the boundary or inside the domain, but not simultaneously on both parts).  
Interestingly, for certain parameter regimes, our method of proof leads to a significant difference between the problems
\eqref{eq_0} and \eqref{eq}. 
Namely, in the case that $\lambda > \beta$ one can 
always stabilise equation \eqref{eq_0} by noise with suitable intensity, no matter how large $\beta$ is, whereas 
for problem \eqref{eq} we can only show stabilisation for $\beta$ below a critical value $\beta_{\mathrm{crit}}$.

\medskip
We remark that the parameter conditions of our results imply the \textit{preservation of stability}, i.e. 
if the trivial steady state of the deterministic problem is stable, then, its stability is preserved under  
stochastic perturbations by noise with sufficiently small intensity. On the other hand, we also show 
\textit{stabilisation by noise}, i.e. for  parameter ranges for which the zero solution of the deterministic 
equation is unstable, 
adding noise on the boundary with an appropriate intensity leads to stabilisation.

To highlight this latter case, we analyse problem \eqref{eq} in one dimension in more details in the last section of the paper. Firstly, we study the linearised equation around the zero 
steady state of the deterministic problem, 
where we can derive an explicit representation for the solution by separation of variables. Then, by choosing appropriate 
values for the parameters $\beta >0$ and $\lambda>0$ we 
show that the zero solution of the linearised equation is unstable.  
Since all the eigenvalues of the stationary problem have non-zero real part, an infinite dimensional version of the Hartman-Grobman theorem implies the instability of the zero solution of the nonlinear deterministic Chafee-Infante model. 
Finally, applying our main results on stabilisation by noise allows to determine an explicit range of noise intensities that stabilise
 equation \eqref{eq}.

\medskip 
{\bf Organisation of the paper:} In Section \ref{sec:pre}, we prove the crucial functional inequality \eqref{crucial_ineq}, 
derive an explicit representation for the constant $C_\theta$
and analyse the stability of the zero solution for the unperturbed deterministic problem. 
The main result on the stabilisation by noise on the boundary is established in Section \ref{sec:Stochastic}. 
In particular, we determine an explicit range for the noise intensities for \eqref{eq} that lead to
stabilisation. 
Section \ref{sec:Stochastic2} is devoted to problem \eqref{eq_0}, where the noise acts inside the domain, 
and the results on stabilisation are compared to the setting with boundary noise \eqref{eq}.  
In section \eqref{sec:1D}, a one-dimensional example the 
instability of the deterministic problem and the stabilisation by noise on the boundary is detailed. And finally Section \ref{sec:concl} is a conclusion.

%%%%%%%%%%%%%%%%
\section{Preliminaries}\label{sec:pre}
%%%%%%%%%%%%%%%%

{\bf Notations:} Here and in the sequel, we denote by $\|\cdot\|_{D}$ and $\|\cdot\|_{\partial D}$ 
the norms in $L^2(D)$ and $L^2(\partial D)$, respectively. 
The inner products in $L^ 2(D)$ and $L^2(\partial D)$ are denoted by $\langle\cdot,\cdot\rangle_D$ and $\langle\cdot,\cdot\rangle_{\partial D}$, and 
the norm in $L^p(D)$ for $p\ne 2$ by $\|\cdot\|_{p,D}$.

\subsection{A Poincar\'e-Trace inequality}

The following inequality and the explicit expression for the corresponding constant play an important role in our analysis.
	\begin{lemma}[A Poincar\'e-Trace Inequality]\label{TrIneq}
	Let $D\subset\mathbb R^d$, $d \in\N$, be a bounded domain with smooth boundary $\partial D$. 
		For every $\theta > 0$, there exists an optimal constant $C_\theta^*>0$ such that 
		\begin{equation}\label{func_ineq}
		 C_{\theta}^*\|u\|_{D}^2 \leq \|\nabla u\|_D^2 + \theta \|u\|_{\partial D}^2 \qquad \forall u\in H^1(D),
		\end{equation}
		%\textcolor{olive}
		where $C_{\theta}^*$ is continuous and non-increasing with respect to $\theta$, and $C_0^*=0$. 
		
		\medskip
		Let $R = \frac 12 \mathrm{diam}(D)$. Then the constant $C_\theta$ defined as
%		In particular, for the rest of the paper, we shall choose $C_\theta$ as
		\begin{equation}\label{C_theta}
			C_\theta = \begin{cases}
				\theta\left(d/R - \theta\right), &\text{ if } \theta \in \left[0,\frac{d}{2R}\right),\\
				\frac{d^2}{4R^2}, &\text{ if } \theta \in \left[\frac{d}{2R},\infty\right),
				\end{cases}
		\end{equation}
		% where $R>0$ is such that $2R = \mathrm{diam}(D)$. 
		also fulfils \eqref{func_ineq}.
	\end{lemma}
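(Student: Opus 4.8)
The plan is to prove the explicit inequality for $C_\theta$ via a Rellich--Pohozaev-type vector field identity, after briefly recording that $C_\theta^*$ is the bottom of a Rayleigh quotient. First I would observe that $C_\theta^*$ is by definition the infimum of $R_\theta(u) := (\|\nabla u\|_D^2 + \theta\|u\|_{\partial D}^2)/\|u\|_D^2$ over $u\in H^1(D)\setminus\{0\}$. The compact embeddings $H^1(D)\hookrightarrow L^2(D)$ and $H^1(D)\hookrightarrow L^2(\partial D)$ guarantee that this infimum is attained, and it coincides with the first Robin eigenvalue; that $C_\theta^*>0$ for $\theta>0$ follows since a minimizing sequence at level $0$ would force $\nabla u\equiv 0$ together with $u|_{\partial D}\equiv 0$, hence $u\equiv 0$, a contradiction. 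Monotonicity in $\theta$ is immediate because the numerator of $R_\theta$ is nondecreasing in $\theta$ for each fixed $u$; as an infimum of functions that are affine and increasing in $\theta$, the map $\theta\mapsto C_\theta^*$ is concave, hence continuous on $(0,\infty)$, and $C_0^*=0$ follows by testing with constants. These facts are standard and I would keep their treatment short.

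The core of the argument is the explicit constant. I would fix a point $x_0$ with $D\subset B(x_0,R)$, so that $|x-x_0|\le R$ on $\overline D$, and apply the divergence theorem to the vector field $F(x)=x-x_0$ tested against $u^2$. Since $\operatorname{div}F=d$, and treating smooth $u$ first and passing to $H^1(D)$ by density, this gives the identity
\[
 d\,\|u\|_D^2 \;=\; \int_{\partial D} u^2\,(x-x_0)\cdot\nu\, dS \;-\; 2\int_D u\,\nabla u\cdot(x-x_0)\, dx .
\]
Bounding $(x-x_0)\cdot\nu \le |x-x_0|\le R$ on $\partial D$ controls the boundary term by $R\,\|u\|_{\partial D}^2$, while Cauchy--Schwarz together with Young's inequality with a free parameter $\varepsilon>0$ gives
\[
 -2\int_D u\,\nabla u\cdot(x-x_0)\, dx \;\le\; 2R\int_D |u|\,|\nabla u|\, dx \;\le\; \frac{R}{\varepsilon}\,\|u\|_D^2 + R\varepsilon\,\|\nabla u\|_D^2 .
\]
Combining and rearranging yields $\left(d-\tfrac{R}{\varepsilon}\right)\|u\|_D^2 \le R\,\|u\|_{\partial D}^2 + R\varepsilon\,\|\nabla u\|_D^2$.

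It then remains to choose $\varepsilon$ so as to convert this into the target form. Dividing by $R\varepsilon$ and selecting $\varepsilon=1/\theta$ matches the boundary coefficient to exactly $\theta$ and produces
\[
 \theta\left(\frac{d}{R}-\theta\right)\|u\|_D^2 \;\le\; \|\nabla u\|_D^2 + \theta\,\|u\|_{\partial D}^2 ,
\]
which is \eqref{func_ineq} with $C_\theta=\theta(d/R-\theta)$, positive precisely for $\theta\in(0,d/R)$ and in particular on $[0,\tfrac{d}{2R})$. For the regime $\theta\ge \tfrac{d}{2R}$ I would not re-optimise but instead exploit monotonicity: the left-hand side of \eqref{func_ineq} is nondecreasing in $\theta$, so for $\theta\ge\tfrac{d}{2R}$ it dominates its value at $\theta=\tfrac{d}{2R}$, for which the first branch already supplies the constant $\tfrac{d}{2R}\bigl(\tfrac{d}{R}-\tfrac{d}{2R}\bigr)=\tfrac{d^2}{4R^2}$. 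This delivers the second branch of \eqref{C_theta}.

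The main obstacle I anticipate is the geometric choice in the identity, namely securing $|x-x_0|\le R$ throughout $D$ with $R=\tfrac12\operatorname{diam}(D)$, since for $d\ge 2$ the circumradius of a set need not equal half its diameter; I would either verify directly that $D$ lies in a ball of radius $R$ about a suitable centre, or, at the cost of a slightly larger constant, replace $R$ by the circumradius. The remaining steps---the divergence identity, the Young estimate, and the monotonicity extension---are routine once the optimal parameter $\varepsilon=1/\theta$ has been identified, which is the one genuinely delicate point, as it is exactly what forces the coefficient $\theta(d/R-\theta)$ and the crossover at $\theta=\tfrac{d}{2R}$.
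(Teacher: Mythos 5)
Your proof is correct and follows essentially the same route as the paper's: the paper tests the identity $\int_D u^2\Delta\Phi\,dx$ with $\Phi(x)=|x-x_0|^2/(2d)$, whose gradient $(x-x_0)/d$ is exactly your vector field up to the factor $1/d$, then performs the same integration by parts and Young's inequality (the paper's weights $\theta,1/\theta$ correspond to your choice $\varepsilon=1/\theta$), and treats $\theta\ge d/(2R)$ by the same monotonicity observation. Two points of comparison are worth recording. First, your concavity argument for the continuity of $\theta\mapsto C_\theta^*$ (an infimum of finite affine functions of $\theta$, hence concave and continuous on $(0,\infty)$) is self-contained, whereas the paper cites the literature; note also that the lemma's ``non-increasing'' must be a typo, since both your monotonicity argument and $C_0^*=0<C_\theta^*$ force \emph{non-decreasing}. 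Second, and more importantly, the obstacle you flag at the end is genuine, and the paper does not actually resolve it: it simply asserts that one can choose $x_0$ with $|x-x_0|\le R$ on $\partial D$ for $R=\frac12\mathrm{diam}(D)$, which fails in general for $d\ge 2$. For a smoothed equilateral triangle of side $s$ the diameter is $s$, yet no point lies within distance $s/2$ of all three corners; by Jung's theorem the radius that always suffices is $\mathrm{diam}(D)\sqrt{d/(2(d+1))}$, which equals $\frac12\mathrm{diam}(D)$ only when $d=1$. Consequently, your fallback --- replacing $R$ by the circumradius, at the price of a correspondingly smaller constant in \eqref{C_theta} --- is not optional caution but the correction needed to make both your argument and the paper's proof of \eqref{func_ineq} rigorous in dimensions $d\ge 2$.
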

	
\begin{remark}\label{remark1}\hfill
	\begin{itemize}
		\item[(i)] For the rest of this paper, we denote by $C_\theta^*$ the optimal constant in \eqref{func_ineq} while $C_\theta$ is the explicit constant defined in \eqref{C_theta}.
		\item[(ii)] Inequality \eqref{func_ineq} is well-known in functional analysis concerning equivalent 
		norms in $H^1(D)$, see e.g., \cite{SM09}. However, its proof is usually based on a contradiction argument and 
		thus, does not yield an explicit expression nor a quantitative estimate for $C_\theta^*$. $C_\theta^*$ is the first eigenvalue of the Laplace operator with Robin boundary conditions $\pa_\nu u + \theta u = 0$, see e.g. \cite{Fil15,Kov14}. 
		In one dimension, one can solve  the eigenvalue problem explicitly. Obtaining the optimal constant $C_\theta^*$ in higher dimensions goes beyond the scope of this paper.
		\item[(iii)] As $\theta$ varies in $(0,\infty)$, $C_\theta^*$ always has an upper bound $C_{\theta,\max}^*$. Indeed, choosing $u\in H_0^1(D)$ it follows from Poincar\'e's inequality that $C_\theta^* \leq \lambda_1$ for any $\theta>0$, where $\lambda_1>0$ is the first  eigenvalue of the Laplacian $-\Delta$ with homogeneous Dirichlet boundary conditions.
		\item[(iv)] The explicit expression for $C_\theta$ in \eqref{C_theta} allows us to specify ranges of noise intensities, depending on $\alpha, \beta,d$ and $R$, that stabilise the equation.
	\end{itemize}
\end{remark}

\begin{proof}[Proof of Lemma \ref{TrIneq}]
Consider the eigenvalue problem for the Laplacian with Robin boundary conditions
\begin{equation*}
\begin{cases}
-\Delta u = \kappa u, &\text{ in }  D,\\
\pa_\nu u + \theta u = 0, &\text{ on } \pa D.
\end{cases}
\end{equation*}
By classical results from the calculus of variations, the first eigenvalue is given by
\begin{equation*}
C_\theta^*:= \inf_{u\in H^1(D), u\ne 0}\frac{\|\nabla u\|_{D}^2 + \theta\|u\|_{\pa D}^2}{\|u\|_D^2},
\end{equation*}
it is positive and the corresponding first eigenfunction $\psi_1\in H^1(\Omega)$ is positive. Then, certainly, 
\eqref{func_ineq} is satisfied. The continuity of $C_\theta^*$ with respect to $\theta$ follows from \cite[Theorem 1]{Fil14} ($C_\theta^*$ is even differentiable w.r.t $\theta$).

\medskip
We now show that the constant $C_\theta$ defined in \eqref{C_theta} also fulfills inequality 
\eqref{func_ineq}. Since $2R = \mathrm{diam}(D)$ we can choose $x_0\in\mathbb{R}^d$ 
such that $|x-x_0|\leq R$ for all $x\in \partial{D},$
where $|\cdot|$ denotes the norm in $\R^d.$ 
We consider the function $\Phi(x)=\frac{|x-x_0|^2}{2d}$. Then, 
$\nabla \Phi(x)=\frac{x-x_0}{d}$ and $\Delta\Phi(x)=1$. Let $\theta\in(0,\frac{d}{R})$. 
Then, using integration by parts and Young's inequality it follows that 
\begin{align*}
\int_Du^2(x)dx&=\int_Du^2(x)\Delta \Phi(x) dx\\
&=\int_{\partial D}u^2(x)\partial_\nu\Phi(x) dS(x)-2\int_D u(x)\nabla u(x)\cdot \nabla\Phi(x) dx\\
&\leq \|\partial_\nu\Phi\|_{\infty,\partial D}\|u\|^ 2_{\partial D}+ \|\nabla\Phi\|_{\infty,D}
\left(\theta \|u\|^ 2_D+\frac{1}{\theta}\|\nabla u\|^2_{D}\right).
\end{align*}
Hence, we obtain 
\begin{align*}
\left(1-\theta \|\nabla\Phi\|_{\infty,D}\right)\|u\|^ 2_D&
\leq \|\partial_\nu\Phi\|_{\infty,\partial D} \|u\|^ 2_{\partial D}+ \frac{\|\nabla\Phi\|_{\infty,D}}{\theta}\|\nabla u\|^2_{D},
\end{align*}
which implies that 
$$
\theta\left( \frac{1}{\|\nabla\Phi\|_{\infty,D} }-\theta\right)\|u\|^2_D\leq 
\theta\frac{\|\partial_\nu\Phi\|_{\infty,\partial D} }{\|\nabla\Phi\|_{\infty,D}} \|u\|^2_{\partial D}+ \|\nabla u\|_D^2.
$$
Now, using that
$$
\|\nabla\Phi\|_{\infty,D}=\frac{R}{d} \qquad \text{ and } \qquad \|\partial_\nu\Phi\|_{\infty,\partial D} = \|\nabla \Phi \cdot \nu \|_{\infty, \partial D}\leq \frac{R}{d}
$$
we obtain 
$$
\theta\left(\frac{d}{R}-\theta\right)\|u\|_{D}^2\leq \|\nabla u\|_D^2 + \theta \|u\|_{\partial D}^2.
$$
As required, $C_0=0$ and $C_\theta:= \theta(\frac dR - \theta)$ is increasing for $\theta\in\left(0,\frac{d}{2R}\right]$.
However, since $\theta(\frac{d}{R} - \theta)$ is decreasing within the interval $\left(\frac{d}{2R}, \frac{d}{R}\right)$, 
we observe that for $\theta\geq \frac{d}{2R}$
\begin{equation*}
	\|\nabla u\|_D^2 + \theta \|u\|_{\partial D}^2 \geq \|\nabla u\|_D^2 + \frac{d}{2R}\|u\|_{\partial D}^2 \geq \frac{d^2}{4R^2}\|u\|_{D}^2
	\qquad \forall u\in H^ 1(D),
\end{equation*}
and hence, we set $C_\theta=\frac{d^2}{4R^2}$ for all $\theta \in \left[\frac{d}{2R},\infty\right)$.
Then, $C_\theta$ is non-decreasing and depends continuously on $\theta$, which completes the proof.
\end{proof}

%%%%%%%%%%%%%%%%
\subsection{The deterministic equation}\label{sec:Deterministic}
%%%%%%%%%%%%%%%%

In this section, we consider the unperturbed deterministic Chafee-Infante  equation with dynamical boundary conditions, 
	\begin{equation}\label{DeEq}
		\begin{cases}
			u_t - \Delta u + u^3 - \beta u  = 0 &\text{in}\ Q_T,\\
			u_t + \partial_{\nu}u + \lambda u = 0 &\text{on}\ S_T,\\
			u(x,0) = u_0(x), & x\in D,\\
			u(x,0) = \phi(x), & x\in \partial D,
		\end{cases}
	\end{equation}	
	and formulate sufficient conditions for the exponential stability of the trivial steady state.

As in the introduction 
we first rewrite problem \eqref{DeEq} in an abstract form. 
To this end let  
	$$
		H = L^2(D)\times L^2(\partial D),\qquad V_0=H^1(D)\times H^{1/2}(\partial D)
	$$
be as defined in the Introduction with the norms $\|\cdot\|^2_H=\|\cdot\|^2_D+\|\cdot\|^2_{\partial D}$ and 
	$\|\cdot\|^2_{V_0}=\|\cdot\|^2_{H^ 1(D)}+\|\cdot\|^2_{H^{\frac{1}{2}} (\partial D)}$ and 
	the inner product in $H$ be denoted by
	$\langle\cdot,\cdot\rangle=\langle\cdot,\cdot\rangle_{H}=\langle\cdot,\cdot\rangle_{ D}+\langle\cdot,\cdot\rangle_{\partial D}$.
	Moreover, let
	$$	
		 V = \{(u,T(u)): u\in H^ 1(D)\}\subset V_0, 
	$$
with the norm induced by $V_0$, where $T\in\mathcal{L}(H^ 1(D);H^ {\frac{1}{2}}(\partial D))$ denotes the trace operator $T(u)=u|_{\partial D}.$ 
Then, $V$ is a closed vector subspace of $V_0$, and densely and compactly embedded into $H$. 
Identifying $H$ with its dual we have the Gelfand triple $V\hookrightarrow H\hookrightarrow V^ *$, 
where $V^*$ denotes the dual of $V$. 
Let $A:V\to V^*$ be the continuous linear operator 
defined by  the symmetric, continuous bilinear form 
\begin{equation*}
		a(U, \Phi) = \int_{D}\nabla u(x) \cdot \nabla \varphi(x) dx + \lambda \int_{\partial D}u(x)|_{\partial D} \varphi(x)|_{\partial D} dS(x) 
		\qquad U,\Phi\in V,
	\end{equation*}
where $U = ( u,u|_{\partial D} )$, $\Phi = (\varphi, \varphi|_{\partial D})$. 
The Poincar\'e-Trace inequality \eqref{func_ineq} implies that $a$ is coercive and hence, by the Lax-Milgram Theorem $A$ has a 
bounded inverse $A^ {-1}:V^ *\to V$. Its restriction to $H$ is a compact bounded operator 
and its inverse is the operator $A:D(A)\to H$ with domain $D(A)=\{U\in V: AU\in H\}$.
Moreover, Lemma \ref{TrIneq} implies that 
$A$  is positive,
\begin{align*}
\langle AU,U\rangle &=a(U, U)= \int_{D}|\nabla u(x)|^ 2 dx + \lambda \int_{\partial D}|u(x)|_{\partial D}|^ 2 dS(x) \\
&\geq C_{\frac{\lambda}{2}}^*\|u\|_D^ 2+\frac{\lambda}{2}\|u\|^ 2_{\partial D}\geq 
\min\left\{\frac{\lambda}{2},C_{\frac{\lambda}{2}}^*\right\}\|U\|_H^2&&\forall U\in D(A), 
\end{align*}
where $C_{\frac{\lambda}{2}}^*$ is the optimal constant in \eqref{func_ineq}.
Hence, since $A$ is a positive, self-adjoint operator with compact resolvent,
there exists an orthonormal basis $\{V_j\}\subset D(A)$ in $H$ consisting of 
eigenfunctions of $A$ with corresponding eigenvalues $\lambda_j $ such that 
$$
0<\lambda_j\leq \lambda_{j+1},\ j\in\N, \qquad \lambda_j\to\infty.
$$ 
Defining $B: L^4(D)\times L^2(\partial D) \to L^{\frac{4}{3}}(D)\times L^2(\partial D)$, 
$B(U) = (u^3 - \beta u, 0),$ 
we rewrite \eqref{DeEq} as
	\begin{align}\label{DeEq1}
		\begin{split}
			\frac{d}{dt}U + AU + B(U) &=0, \quad \text{ in } V^*+(L^{\frac 43}(D)\times L^2(\pa D)), \\
			U(0) = U_0 &= (u_0, \phi)\in H.
		\end{split}
	\end{align}

With an abuse of notation, in the sequel we will drop the amendment $|_{\partial D}$ in the second component of $U$, 
where the value of the function on the boundary is taken. 
We observe that
	\begin{align}\label{AB}
		\langle A U, U \rangle  &= \|\nabla u\|_D^2 + \lambda\|u\|_{\partial D}^2,&& U\in D(A),\nonumber\\
		\langle B(U), U \rangle &= \|u\|_{4,D}^4 - \beta \|u\|_D^2,&& U\in V.
	\end{align}	
	\begin{theorem}\label{determin}
		For any initial data $(u_0, \phi)\in L^2(D)\times L^2(\partial D)$ and $T>0$, there exists a unique weak 
		solution $u$ of \eqref{DeEq}, and 
		\begin{equation*}
			u\in C([0,T]; L^2(D)) \cap L^2([0,T];H^1(D))\cap L^4([0,T];L^4(D)),
		\end{equation*}
		\begin{equation*}
			u|_{\partial D} \in C([0,T]; L^2(\partial D))\cap L^2([0,T];H^{1/2}(\partial D)).
		\end{equation*}
		
		Moreover, if $\lambda$ and $\beta$ are such that 
		\begin{equation*}
			C_{\lambda}^* > \beta,
		\end{equation*}
		then the zero steady state  is exponentially stable. On the other hand, if 
		\[
			C_\lambda^* < \beta
		\]
		the zero steady state is unstable.
	\end{theorem}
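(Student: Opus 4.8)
The plan is to treat the three assertions separately, working with the abstract formulation \eqref{DeEq1} and the structural identities \eqref{AB}. For \emph{well-posedness} I would run a Galerkin scheme in the eigenbasis $\{V_j\}$ of $A$ and extract a priori estimates from the energy identity: testing \eqref{DeEq1} against $U$ and using \eqref{AB} gives
\begin{equation*}
\frac12\frac{d}{dt}\|U\|_H^2 + \|\nabla u\|_D^2 + \lambda\|u\|_{\partial D}^2 + \|u\|_{4,D}^4 = \beta\|u\|_D^2,
\end{equation*}
where the quartic term both renders the problem coercive and, via Young's inequality, absorbs the sign-indefinite term $\beta\|u\|_D^2$. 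This yields uniform bounds for $U_n$ in $L^2(0,T;V)\cap L^\infty(0,T;H)$ and for $u_n$ in $L^4(Q_T)$, and hence a bound on $U_n'$ in the corresponding dual scale. Since $u\mapsto u^3$ is monotone from $L^4(D)$ to $L^{4/3}(D)$ and $-\beta u$ is a bounded lower-order perturbation, I would pass to the limit by the standard monotonicity/compactness argument (Minty's trick together with Aubin--Lions), obtaining a weak solution in the asserted classes, with $U\in C([0,T];H)$ following from the Gelfand-triple embedding. Uniqueness is immediate: for the difference $W=U-\tilde U$ of two solutions, monotonicity lets me drop $\langle u^3-\tilde u^3,w\rangle_D\geq0$, so that $\frac12\frac{d}{dt}\|W\|_H^2\leq\beta\|w\|_D^2\leq\beta\|W\|_H^2$, and Grönwall forces $W\equiv0$.

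For \emph{exponential stability} when $C_\lambda^*>\beta$, I would discard the nonnegative quartic term in the energy identity to obtain
\begin{equation*}
\frac12\frac{d}{dt}\|U\|_H^2 \leq -\|\nabla u\|_D^2 - \lambda\|u\|_{\partial D}^2 + \beta\|u\|_D^2.
\end{equation*}
The point is to bound the right-hand side by $-\delta\|U\|_H^2$ with $\delta>0$, which requires controlling \emph{both} $\|u\|_D^2$ and the boundary norm $\|u\|_{\partial D}^2$. Using the continuity of $\theta\mapsto C_\theta^*$ from Lemma \ref{TrIneq} together with $C_\lambda^*>\beta$, I pick $\mu\in(0,\lambda)$ with $C_\mu^*>\beta$ and split
\begin{equation*}
\|\nabla u\|_D^2 + \lambda\|u\|_{\partial D}^2 = \bigl(\|\nabla u\|_D^2 + \mu\|u\|_{\partial D}^2\bigr) + (\lambda-\mu)\|u\|_{\partial D}^2 \geq C_\mu^*\|u\|_D^2 + (\lambda-\mu)\|u\|_{\partial D}^2.
\end{equation*}
With $\delta=\min\{C_\mu^*-\beta,\lambda-\mu\}>0$ this yields $\frac{d}{dt}\|U\|_H^2\leq-2\delta\|U\|_H^2$, and Grönwall gives $\|U(t)\|_H^2\leq e^{-2\delta t}\|U_0\|_H^2$.

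For \emph{instability} when $C_\lambda^*<\beta$, I would linearise \eqref{DeEq1} at $U=0$. Since $B'(0)U=(-\beta u,0)=-\beta PU$ with the orthogonal projection $P(u,w)=(u,0)$, the linearisation is governed by the self-adjoint operator $A-\beta P$, whose quadratic form is precisely $\|\nabla u\|_D^2+\lambda\|u\|_{\partial D}^2-\beta\|u\|_D^2$. Evaluating it at $\Psi_1=(\psi_1,\psi_1|_{\partial D})$, where $\psi_1$ is the first Robin eigenfunction realising $\|\nabla\psi_1\|_D^2+\lambda\|\psi_1\|_{\partial D}^2=C_\lambda^*\|\psi_1\|_D^2$, gives $(C_\lambda^*-\beta)\|\psi_1\|_D^2<0$. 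Hence $\inf\mathrm{spec}(A-\beta P)<0$, so $A-\beta P$ has a negative eigenvalue and the linearised flow possesses an unstable direction. Finally I would transfer this to the nonlinear problem: the nonlinearity $(u^3,0)$ is $C^1$ with vanishing derivative at the origin, hence genuinely higher order, and the equilibrium is hyperbolic, so a standard nonlinear-instability result (the Hartman--Grobman/unstable-manifold machinery alluded to in the Introduction) yields instability of the trivial steady state.

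The main obstacle I expect is the instability step: the passage from linear to nonlinear instability must be justified carefully in the present functional setting, where the interior and boundary components are coupled through the dynamical boundary condition, and one has to confirm that the cubic term is subordinate in the norms dictated by the Gelfand triple $V\hookrightarrow H\hookrightarrow V^*$. A secondary technical point is the limit passage in the existence proof, where the non-monotone term $-\beta u$ and the nonlinear interior term have to be handled simultaneously with the boundary evolution.
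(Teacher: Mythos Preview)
Your proposal is correct. The well-posedness sketch fleshes out what the paper simply cites from the literature, and your stability argument is identical to the paper's (the paper writes $\lambda-\varepsilon$ where you write $\mu$, and arrives at the same $\delta=\min\{C_{\lambda-\varepsilon}^*-\beta,\varepsilon\}$).

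The instability step is where you and the paper diverge. The paper uses Kaplan's method: by continuity of $\theta\mapsto C_\theta^*$ it picks $\theta>\lambda$ with $C_\theta^*<\beta$, takes the positive first Robin eigenfunction $\psi_1$ for parameter $\theta$ (so $-\Delta\psi_1=C_\theta^*\psi_1$ in $D$, $\partial_\nu\psi_1+\theta\psi_1=0$ on $\partial D$), tests the linearised equation against $\psi_1$, integrates by parts, and obtains
\[
\frac{d}{dt}\Bigl(\int_D u\psi_1\,dx+\int_{\partial D}u\psi_1\,dS\Bigr)\ \geq\ \min\{\beta-C_\theta^*,\ \theta-\lambda\}\Bigl(\int_D u\psi_1\,dx+\int_{\partial D}u\psi_1\,dS\Bigr),
\]
so an explicit linear functional of the solution grows exponentially. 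You instead argue spectrally: the quadratic form of the self-adjoint operator $A-\beta P$ evaluated at the Robin eigenfunction for parameter $\lambda$ itself is $(C_\lambda^*-\beta)\|\psi_1\|_D^2<0$, hence the bottom of the spectrum is negative. Your route is cleaner and avoids the auxiliary parameter $\theta$; the paper's Kaplan argument is more hands-on in that it produces an explicit growing mode along the linear flow and does not require checking that $A-\beta P$ inherits compact resolvent from $A$. Both arguments finish with the same appeal to an infinite-dimensional Hartman--Grobman theorem, and both tacitly assume hyperbolicity (no zero eigenvalue of $A-\beta P$) for that step.
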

	\begin{proof}
		The existence and uniqueness of solutions follows as, e.g. in \cite{CMR17}, pp. 824--825, see also \cite[Theorem 1.4]{Li69}
		and the subsequent remark.

		\medskip		
		To show the exponential stability of the zero steady state, we compute
		\begin{equation*}
			\begin{aligned}
			\frac 12 \frac{d}{dt}\|u\|_D^2 &= \int_{D}u(x) u_t(x)dx = \int_{D}u(x)(\Delta u(x) - u^3(x) + \beta u(x))dx\\
			&= -\|\nabla u\|_D^2 + \int_{\partial D}u(x)\partial_{\nu}u(x) dS(x) - \|u\|_{4,D}^4 + \beta \|u\|_{D}^2\\
			&= -\|\nabla u\|_D^2 - \frac 12 \frac{d}{dt}\|u\|_{\partial D}^2 - \lambda \|u\|_{\partial D}^2 - \|u\|_{4,D}^4 + \beta \|u\|_{D}^2,
			\end{aligned}
		\end{equation*}
		and therefore,
		\begin{equation*}			
			\frac 12 \frac{d}{dt}(\|u\|_D^2 + \|u\|_{\partial D}^2) \leq -(\|\nabla u\|_D^2 + \lambda \|u\|_{\partial D}^2 - \beta\|u\|_D^2).
		\end{equation*}
		Since $C_{\lambda}^* > \beta$ and $C_{\lambda}^*$ depends continuously on $\lambda$ (see Lemma \ref{TrIneq}), 
		there exists $\varepsilon>0$ such that $C_{\lambda - \varepsilon}^* > \beta$. Thus, we obtain the estimate
		\begin{equation*}
			\|\nabla u\|_{D}^2 + \lambda \|u\|_{\partial D}^2 = \varepsilon \|u\|_{\partial D}^2 + (\|\nabla u\|_D^2 + (\lambda - \varepsilon)\|u\|_{\partial D}^2) \geq \varepsilon\|u\|_{\partial D}^2 + C_{\lambda-\varepsilon}^*\|u\|_{D}^2,
		\end{equation*}
		and it follows that
		\begin{equation*}
			\frac 12 \frac{d}{dt}(\|u\|_D^2 + \|u\|_{\partial D}^2) \leq - \varepsilon\|u\|_{\partial D}^2 - (C_{\lambda - \varepsilon}^* - \beta)\|u\|_{D}^2 = -\delta(\|u\|_D^2 + \|u\|_{\partial D}^2),
		\end{equation*}
		where $\delta = \min\{\varepsilon, C_{\lambda - \varepsilon}^* - \beta\} > 0$. Finally, we get
		\begin{equation*}
			\|u(t)\|_D^2 + \|u(t)\|_{\partial D}^2 \leq e^{-2\delta t}(\|u_0\|_{D}^2 + \|\phi\|_{\partial D}^2) \qquad \forall t\geq 0,
		\end{equation*}
		which proves the exponential stability of the zero steady state.
		
		\medskip
		To prove instability, we use Kaplan's method. It suffices to show the instability for the linearised problem. By an infinite dimensional Hartman-Grobman theorem, see e.g. \cite{Lu91} or \cite[Corollary 5.1.6]{Hen81}, it then follows the instability for the nonlinear problem. Since $\beta > C_\lambda^*$ and $C_\lambda^*$ depends continuously on $\lambda$, we can choose some $\theta > \lambda$ such that $\beta > C_\theta^*$. Denote by $\psi_1$ the positive eigenfunction corresponding to the first eigenvalue $C_\theta^*$, i.e. $-\Delta \psi_1 = C_\theta^*\psi_1$ in $D$ and $\pa_\nu \psi_1 + \theta \psi_1 = 0$ on $\pa D$. 
		Testing the linear equation 
		\begin{equation*}
			\begin{cases}
				u_t - \Delta u - \beta u = 0, &\text{ in } D,\\
				u_t + \pa_\nu u + \lambda u = 0, &\text{ on } \pa D
			\end{cases}
		\end{equation*}
		with $\psi_1$ and integrating by parts, we obtain
		\begin{equation*}
			\begin{aligned}
			\frac{d}{dt}\int_D u\psi_1 dx &= \int_D \psi_1 (\Delta u + \beta u)dx\\
			&= -\frac{d}{dt}\int_{\pa D}u\psi_1 dS + \beta\int_{D}u\psi_1 dx - \lambda\int_{\pa D}u\psi_1dS + \int_D u\Delta \psi_1 dx + \theta \int_{\pa D}u\psi_1dS.
			\end{aligned}
		\end{equation*}
		By using $\Delta \psi_1 = -C_\theta^*\psi_1$, we have
		\begin{equation*}
			\begin{aligned}
			\frac{d}{dt}\left(\int_D u\psi_1 dx + \int_{\pa D}u \psi_1 dS\right) &= (\beta - C_\theta^*)\int_D u\psi_1 dx + (\theta - \lambda)\int_{\pa D}u\psi_1 dS\\
			&\geq \min\{\beta - C_\theta^*; \theta - \lambda \}\left(\int_D u\psi_1 dx + \int_{\pa D}u \psi_1 dS\right),
			\end{aligned}
		\end{equation*}
		which implies the exponential growth of $\int_D u\psi_1dx + \int_{\pa D}u\psi_1 dS$, and consequently the instability of the zero steady state.
		
	\end{proof}
%%%%%%%%%%%%%%%%	
\section{Stabilisation by noise on the boundary}\label{sec:Stochastic}
%%%%%%%%%%%%%%%%
%

In this section, we formulate an existence result for strong solutions of the stochastic Chafee-Infante equation \eqref{eq}, 
and investigate whether the equation can be  stabilised by noise on the boundary.

\medskip

As in the introduction 
we first rewrite problem \eqref{eq} in the abstract form 
	\begin{align}\label{EqAb}
		\begin{split}
			dU + (AU + B(U))dt &= C(U)\, dW_t,\\
			U(0) = U_0 &= (u_0, \phi)\in H,
		\end{split}
	\end{align}
where the operators $A$ and $B$ were defined in the previous section, and $C:H\to H$
is given by $C(U) = (0, \alpha u)$.
%In the sequel we will drop the amendment $|_{\partial D}$ in the second component of $U$. 
We will frequently use the identities \eqref{AB} and 
	\begin{equation*}\label{oper-estimate}
		\langle C(U), U \rangle  = \alpha\|u\|_{\partial D}^2, \qquad  U\in H.\nonumber
	\end{equation*}

The  existence and uniqueness of strong solutions of \eqref{EqAb} follows, e.g., from 
a slight modification of \cite[Theorem 4.1]{Par79}.
\begin{theorem}
		Let  $T>0$ and assume that the initial data $U_0 = (u_0, \phi)$ is an $H$-valued 
		$\mathcal F_0$-measurable random variable satisfying $\mathbb{E}\|U_0\|_{H}^2<\infty$. 
		Then, there exists a unique strong solution of \eqref{EqAb} in $C([0,T]; H)$ such that
		\begin{equation*}
			\mathbb{E}\Big(\sup_{t\in [0,T]}\|U(t)\|_{H}^2\Big) < \infty \quad \text{ and } \quad \
			\mathbb{E}\Big(\int_0^T\|U(t)\|_{V}^2dt \Big)< \infty.
		\end{equation*}
\end{theorem}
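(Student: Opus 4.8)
The plan is to verify that the abstract system \eqref{EqAb} fits the classical variational framework for monotone, coercive stochastic evolution equations, so that the existence and uniqueness result of \cite{Par79} applies. I would regard the equation as posed in the sum space $V^*+(L^{\frac43}(D)\times L^2(\pa D))$, which is the dual of the reflexive space $\mathcal V:=V\cap(L^4(D)\times L^2(\pa D))$, and set $\mathcal A(U):=-(AU+B(U))$ for the drift, viewing $C(U)=(0,\alpha u)$ as the diffusion (Hilbert--Schmidt, since the driving noise is scalar). The four structural hypotheses to check are hemicontinuity, (local) monotonicity, coercivity, and a polynomial growth bound; once these hold, the abstract theorem yields a unique solution in $L^2(\Omega;C([0,T];H))\cap L^2(\Omega\times(0,T);\mathcal V)$, from which both stated moment bounds follow, the $\mathbb E\sup$-estimate being obtained from an It\^o formula applied to $\|U\|_H^2$ together with the Burkholder--Davis--Gundy inequality.

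Hemicontinuity and the growth bound are routine: $-A$ is bounded and linear, while the Nemytskii operator $u\mapsto u^3$ is demicontinuous and satisfies $\|u^3\|_{\frac43,D}=\|u\|_{4,D}^3$, which gives $\|\mathcal A(U)\|_{\mathcal V^*}\le C(1+\|U\|_{\mathcal V}^3)$. The decisive algebraic inputs are the identities \eqref{AB} together with $\langle C(U),U\rangle=\alpha\|u\|_{\pa D}^2$ and $\|C(U)\|_H^2=\alpha^2\|u\|_{\pa D}^2$. For monotonicity I would exploit that the cubic is \emph{monotone}, $\langle u^3-v^3,u-v\rangle_D\ge 0$; dropping this term and using $\langle A(U-V),U-V\rangle\ge 0$ leaves
\[
2\langle \mathcal A(U)-\mathcal A(V),U-V\rangle + \|C(U)-C(V)\|_H^2 \le 2\beta\|u-v\|_D^2+\alpha^2\|u-v\|_{\pa D}^2 \le K\|U-V\|_H^2,
\]
the required one-sided Lipschitz estimate with $K=\max\{2\beta,\alpha^2\}$.

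The main point, and the only step genuinely sensitive to the boundary noise, is coercivity. From \eqref{AB} one computes
\[
2\langle \mathcal A(U),U\rangle + \|C(U)\|_H^2 = -2\|\nabla u\|_D^2 - (2\lambda-\alpha^2)\|u\|_{\pa D}^2 - 2\|u\|_{4,D}^4 + 2\beta\|u\|_D^2 .
\]
When $\alpha^2\le 2\lambda$ the boundary term is already dissipative: the gradient term controls $\|u\|_{H^1(D)}^2$ via the Poincar\'e--Trace inequality \eqref{func_ineq}, the $-2\|u\|_{4,D}^4$ term supplies the $L^4$-coercivity, and $2\beta\|u\|_D^2$ is absorbed into $K\|U\|_H^2$. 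When $\alpha^2>2\lambda$ the diffusion contributes the \emph{positive} boundary term $(\alpha^2-2\lambda)\|u\|_{\pa D}^2$, which I would dominate by a trace interpolation estimate $\|u\|_{\pa D}^2\le \varepsilon\|\nabla u\|_D^2 + C_\varepsilon\|u\|_D^2$ with $\varepsilon$ so small that $(\alpha^2-2\lambda)\varepsilon<1$; the leftover $C_\varepsilon\|u\|_D^2$ again joins the $H$-norm term. Either way one obtains $2\langle \mathcal A(U),U\rangle + \|C(U)\|_H^2 \le -c\|U\|_{\mathcal V}^2 + K\|U\|_H^2$ for some $c>0$, valid for \emph{every} $\alpha\in\R$, so that well-posedness holds irrespective of stabilisation (the latter being a separate matter of the sign and size of the exponents, addressed in the subsequent analysis).

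The obstacle that forces the ``slight modification'' of \cite{Par79} is structural rather than analytic: the drift acts in the sum space $\mathcal V^*$ rather than in a single Gelfand dual, and the noise operator $C$ is \emph{degenerate}, acting only on the trace component. Neither obstructs the abstract theory — $C$ is still bounded $H\to H$, so its Hilbert--Schmidt norm behaves well, and the monotone-plus-coercive structure above is exactly what the variational method requires — but both must be accommodated when transcribing the hypotheses, which is the content of the referenced adaptation.
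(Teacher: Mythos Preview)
Your proposal is correct and follows exactly the route the paper indicates: the paper's own ``proof'' consists solely of the sentence that existence and uniqueness follows from a slight modification of \cite[Theorem 4.1]{Par79}, and what you have written is precisely a verification that the hypotheses of Pardoux's variational framework (hemicontinuity, local monotonicity, coercivity, growth) are met, together with an identification of why a modification is needed (the intersection space $\mathcal V=V\cap(L^4\times L^2)$ and the degenerate diffusion). Your trace-interpolation argument for coercivity when $\alpha^2>2\lambda$ is the natural device here and is sound; the only cosmetic point is that the coercivity bound should strictly speaking read $-c_1\|U\|_V^2-c_2\|u\|_{4,D}^4+K\|U\|_H^2$ (mixed exponents) rather than $-c\|U\|_{\mathcal V}^2$, which is exactly the two-space structure the ``slight modification'' accommodates.
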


As shown in Section \ref{sec:Deterministic}, in the deterministic case the zero steady state is exponentially stable
provided that $\beta< C_{\lambda}^*$, while the stability is lost for $\beta> C_{\lambda}^*$. 
We now investigate the stability of the zero solution when the system is perturbed by a multiplicative It\^o noise $\alpha u \, dW_t$ 
on the boundary.  In particular, we show that if $\beta<C_\lambda^*$ the exponential stability of the zero steady state  
is preserved for small noise intensities $|\alpha|$ and,  if $\beta\geq C_{\lambda}^*$, 
the zero steady state can be stabilised by noise on the boundary for certain parameter ranges of $\lambda$ 
and $\beta$.

\medskip
Our main result is the following, it yields sufficient conditions for the exponential 
stability of the trivial steady state of the stochastic problem \eqref{EqAb}.

\begin{theorem}\label{pro}
Let $C_\theta^*$ denote the optimal constant in Lemma \ref{TrIneq} and the initial data $U_0\in L^ 2(\Omega,\mathcal{F}_0,P;H)$ 
be such that $\|U_0\|_H\neq 0$ $P$-a.s..

	If there exists a constant $\theta >0$ such that either
	\begin{equation*}
		 C_\theta^* - \beta > \theta - \lambda > 0,
	\end{equation*}
	or
	$$
	C_\theta^* > \beta\quad \text{ and } \quad\lambda\geq \theta,
	$$ 
	then, the solution of \eqref{EqAb} satisfies
	\begin{equation*}
		\limsup_{t\to \infty}\frac{1}{t}\log\|U(t)\|_H^2 < 0\qquad P\text{-a.s.}
	\end{equation*}
	for all noise intensities $|\alpha|$ such that 
	\begin{equation*}
		\frac{\alpha^2}{2} \in
		\begin{cases}
			[\max\{0,\theta - \lambda\}, Z_1) &\text{ if }\ C_\theta^* - \beta > 2(\theta - \lambda),\\
			(Z_1, Z_2) &\text{ if }\ 2(\theta - \lambda) \geq C_\theta^* - \beta > \theta - \lambda,
		\end{cases}
	\end{equation*}
	where
	\begin{equation*}
		Z_{1,2} :=  3(C_\theta^* - \beta) + \lambda - \theta \mp 2\sqrt{2(C_\theta^* - \beta)(C_\theta^* - \beta + \lambda - \theta)}
	\end{equation*}
	with $Z_1$ and $Z_2$ are corresponding to the sign $-$ and $+$ respectively.
\end{theorem}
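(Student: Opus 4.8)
The plan is to track the evolution of $\log\|U(t)\|_H^2$ via It\^o's formula, refining the Lyapunov-exponent method of \cite{CLM01}. Since the hypothesis guarantees $\|U_0\|_H\neq0$ almost surely, I would first argue that $\|U(t)\|_H>0$ for all $t\ge0$ almost surely, so that $\log\|U(t)\|_H^2$ is well defined; this follows because the logarithm obeys a stochastic equation with bounded coefficients and hence cannot reach $-\infty$ in finite time, which a stopping-time/regularisation argument makes rigorous in the Gelfand-triple setting. Applying It\^o's formula to $Y(t)=\|U(t)\|_H^2$ and then to $\log Y$, and inserting the identities $\langle AU,U\rangle=\|\nabla u\|_D^2+\lambda\|u\|_{\partial D}^2$, $\langle B(U),U\rangle=\|u\|_{4,D}^4-\beta\|u\|_D^2$, $\langle C(U),U\rangle=\alpha\|u\|_{\partial D}^2$ and $\|C(U)\|_H^2=\alpha^2\|u\|_{\partial D}^2$ from \eqref{AB}, I obtain a representation
\[
\log\|U(t)\|_H^2=\log\|U_0\|_H^2+\int_0^t \Lambda(U(s))\,ds+M(t),
\]
where $M(t)=\int_0^t \tfrac{2\alpha\|u(s)\|_{\partial D}^2}{\|U(s)\|_H^2}\,dW_s$ is a continuous local martingale and the integrand $\Lambda(U)$ contains, crucially, the stabilising It\^o correction $-2\alpha^2\|u\|_{\partial D}^4/\|U\|_H^4$.

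Next I would dispose of the martingale contribution. Because $\|u\|_{\partial D}^2\le\|U\|_H^2$, the quadratic variation obeys $\langle M\rangle(t)\le 4\alpha^2 t$, so by the strong law of large numbers for continuous local martingales $M(t)/t\to0$ almost surely as $t\to\infty$. Hence the sign of the top Lyapunov exponent is decided entirely by the drift, and it suffices to show $\Lambda(U)\le-c<0$ uniformly in $U$ (with $\|U\|_H\ne0$) for the claimed range of $|\alpha|$.

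To bound the drift I would discard the favourable term $-2\|u\|_{4,D}^4\le0$ and then apply the Poincar\'e-Trace inequality \eqref{func_ineq} with the free parameter $\theta$ in the form $\|\nabla u\|_D^2\ge C_\theta^*\|u\|_D^2-\theta\|u\|_{\partial D}^2$. Writing $a=\|u\|_D^2$, $b=\|u\|_{\partial D}^2$ and normalising by $\|U\|_H^2=a+b$, everything collapses to the single scalar variable $y=b/(a+b)\in[0,1]$, and the drift is controlled (up to a positive factor) by the concave quadratic
\[
h(y)=-2p\,y^2+\big((C_\theta^*-\beta)+(\theta-\lambda)+p\big)y-(C_\theta^*-\beta),\qquad p=\tfrac{\alpha^2}{2}.
\]
The whole problem thus reduces to determining for which $p$ one has $\max_{y\in[0,1]}h(y)<0$. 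The roots in $p$ of $\big(C_\theta^*-\beta+(\theta-\lambda)+p\big)^2=8(C_\theta^*-\beta)p$ are precisely $Z_1,Z_2$, so the unconstrained interior maximum of $h$ is negative exactly on $(Z_1,Z_2)$, while the endpoint values are $h(0)=-(C_\theta^*-\beta)<0$ and $h(1)=(\theta-\lambda)-p$. A case distinction according to whether the vertex $y^\ast=\big(C_\theta^*-\beta+\theta-\lambda+p\big)/(4p)$ lies inside or outside $[0,1]$ — which at the transition $p=(C_\theta^*-\beta+\theta-\lambda)/3$ is governed exactly by the sign of $h(1)$, i.e. by the dichotomy $C_\theta^*-\beta\gtrless 2(\theta-\lambda)$ appearing in the statement — then yields the two stated intervals for $p$. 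The standing hypotheses, which amount to $C_\theta^*-\beta>\max\{0,\theta-\lambda\}$, guarantee both that $C_\theta^*-\beta>0$ (so $h$ is genuinely concave with $h(0)<0$) and that the discriminant $8(C_\theta^*-\beta)(C_\theta^*-\beta+\lambda-\theta)$ is positive, so that $Z_1,Z_2$ are real and the intervals are nonempty.

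I expect the main obstacle to be this last sign analysis, and specifically the careful treatment of the endpoint $y\to1$: the value $y=1$ corresponds to $a=0<b$, which is not attainable for $u\in H^1(D)$ (a vanishing interior $L^2$-norm forces a vanishing trace), and moreover configurations with $b$ comparable to $a$ force $\|\nabla u\|_D^2$ to be large, so the Poincar\'e-Trace bound is lossy there and the precise admissible bracket at the left endpoint must be argued with care. Keeping track of this, together with the rigorous justification of It\^o's formula for the singular functional $\log\|U\|_H^2$ in the variational framework and the almost-sure positivity of $\|U(t)\|_H$, are the genuinely delicate points; the remaining algebra producing the roots $Z_{1,2}$ is routine.
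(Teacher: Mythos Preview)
Your approach is essentially that of the paper: apply It\^o's formula to $\log\|U\|_H^2$, control the drift via the Poincar\'e--Trace inequality with the free parameter $\theta$, and reduce to a one-variable quadratic whose sign analysis produces the roots $Z_{1,2}$. Your parametrisation by $y=b/(a+b)\in[0,1]$ is merely the change of variable $y=1/(Z+1)$ of the paper's $Z=\|u\|_D^2/\|u\|_{\partial D}^2\in(0,\infty)$, and your quadratic $h(y)$ is exactly (up to a factor and this substitution) the paper's inequality \eqref{Z}.

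The one genuine difference is how you dispose of the stochastic integral. The paper follows \cite{CLM01} and uses the exponential martingale inequality together with Borel--Cantelli, which introduces an auxiliary parameter $\kappa\in(0,1)$ that is sent to $0$ at the end; you instead invoke the strong law of large numbers for continuous local martingales, using the bound $\langle M\rangle(t)\le 4\alpha^2 t$ that comes from $\|u\|_{\partial D}^2\le\|U\|_H^2$. Your route is shorter and avoids the $\kappa$-bookkeeping, at the cost of requiring a \emph{uniform} bound $\Lambda(U)\le -c<0$ rather than merely strict negativity of the integrand. This is precisely the subtlety you flag at the left endpoint $p=\theta-\lambda$, where $h(1)=0$; your observation that $y=1$ is unattainable and that the trace inequality forces $\|\nabla u\|_D^2$ to be large when $b\gg a$ is the right way to close this, and the paper's argument faces an equivalent issue at $Z\to0^+$.
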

\begin{proof}
We will apply It\^o's formula (e.g., see \cite{Par79}) to $\psi(U) = \log\|U\|_H^2$, 
where we assume (w.l.o.g.) that $\|U\|_H^2\neq 0$ since otherwise $\|U\|_{H} = 0$ which implies $u=0$ a.e. in $D$ and thus no further stabilisation is needed.
In applying It\^o's formula, we observe that 
	the Fr\'echet derivatives  $\psi'$ and $\psi''$ can be expressed as 
	\begin{equation*}
		\psi'(U) = \frac{2\langle U, \cdot \rangle}{\|U\|_H^2} \quad \text{ and } \quad 
		\psi''(U)h = \frac{2\langle h, \cdot \rangle}{\|U\|_H^2} - \frac{4\langle U, h\rangle \langle U, \cdot \rangle}{\|U\|_H^4}, \qquad U,h\in H.
	\end{equation*}
	Hence, we obtain 
	\begin{equation}\label{e1}
	\begin{aligned}
	\log \|U(t)\|_H^2 &= \log \|U(0)\|_H^2 - 2\int_0^t\frac{1}{\|U\|_H^2}\left(\langle AU + B(U), U\rangle - \frac 12\|C(U)\|_H^2\right)ds\\
	&\quad - 2\int_0^t\frac{\langle C(U), U\rangle^2}{\|U\|_H^4}ds + 2\int_0^t\frac{\langle C(U), U\rangle}{\|U\|_H^2}dW_s\\
	&= \log \|U(0)\|_H^2 - 2\int_0^t\frac{\|\nabla u\|_D^2 + \lambda \|u\|_{\partial D}^2 + \|u\|_{4,D}^4 - \beta \|u\|_D^2 - \frac 12\alpha^2\|u\|_{\partial D}^2}{\|U\|_H^2}ds\\
	&\quad - 2\int_0^t\frac{\langle C(U), U\rangle^2}{\|U\|_H^4}ds + 2\int_0^t\frac{\langle C(U), U\rangle}{\|U\|_H^2}dW_s.
	\end{aligned}
	\end{equation}
	In order to estimate the stochastic integral we apply the exponential martingale inequality (see e.g., \cite[Lemma 1.1]{Li97}) 
	and obtain
	\begin{equation}\label{martingale}
	P\left\{\omega: \sup_{0\leq t\leq T}\left[\int_0^t\frac{\langle C(U), U\rangle}{\|U\|_H^2}dW_s - \frac{\kappa}{2}\int_0^t\frac{\langle C(U), U\rangle^2}{\|U\|_H^4}ds\right] > \frac{2\log k}{\kappa}\right\} \leq \frac{1}{k^2},
	\end{equation}
	where $\kappa \in (0,1)$ and $k\in\N.$ Now, the Borel-Cantelli lemma implies that there exists 
	$k_0( \omega) >0$ for almost all $\omega\in\Omega$ such that
	\begin{equation*}
	\begin{aligned}
	\int_0^t\frac{\langle C(U), U\rangle}{\|U\|_H^2}dW_s 
	&\leq \frac{\kappa}{2}\int_0^t\frac{\langle C(U), U\rangle^2}{\|U\|_H^4}ds +  \frac{2\log k}{\kappa} 
	=\frac{\kappa}{2}\int_0^t\frac{\alpha^2\|u\|_{\partial D}^4}{\|U\|_H^4}ds +  \frac{2\log k}{\kappa}
	\end{aligned}
	\end{equation*}
	for all $k\geq k_0(\omega)$.
	Inserting this estimate into \eqref{e1} yields
	\begin{equation}\label{e1_0}
	\begin{aligned}
	&\quad \log \|U(t)\|_H^2\\
	&\leq \log \|U(0)\|_H^2 + \frac{4\log k}{\kappa}- (2 - \kappa)\int_0^t\frac{\alpha^2\|u\|_{\partial D}^4}{\|U\|_H^4}ds\\
	&\quad - 2\int_0^t\frac{\|\nabla u\|_D^2 + \lambda \|u\|_{\partial D}^2 + \|u\|_{4,D}^4 - \beta \|u\|_D^2 - \frac 12\alpha^2\|u\|_{\partial D}^2}{\|U\|_H^2}ds \\
	&\leq \log \|U(0)\|_H^2 + \frac{4\log k}{\kappa} + 2\beta t\\
	&\quad - \int_0^t\frac{(2\|\nabla u\|_D^2 + 2\lambda \|u\|_{\partial D}^2 + 2\beta \|u\|_{\partial D}^2 - \alpha^2\|u\|_{\partial D}^2)\|U\|_H^2 + (2-\kappa)\alpha^2\|u\|_{\partial D}^4}{\|U\|_H^4}ds.
	\end{aligned}
	\end{equation}
	It is now sufficient to show that there exists a range of $\alpha\in\R$ such that 
	\begin{equation}\label{aim}
	(2\|\nabla u\|_D^2 + 2\lambda \|u\|_{\partial D}^2 + 2\beta \|u\|_{\partial D}^2 - \alpha^2\|u\|_{\partial D}^2)\|U\|_H^2 + (2-\kappa)\alpha^2\|u\|_{\partial D}^4 > 2\beta \|U\|_H^4.
	\end{equation}
	Indeed, if \eqref{aim} holds, then \eqref{e1_0} implies that 
	\begin{equation*}
	\limsup_{t\to \infty}\frac{1}{t}\log\|U(t)\|_H^2 < \limsup_{t\to\infty}\frac{1}{t}\left(\log \|U(0)\|_H^2 + \frac{4\log k}{\kappa}\right) = 0,
	\end{equation*}
	which is the statement of the theorem. 
	
	We now prove \eqref{aim}. %Recalling that $\|U\|_H^2 = \|u\|_D^2 + \|u\|_{\partial D}^2$. 
	By Lemma \ref{TrIneq} for every $\theta >0$ there exists an optimal constant $C_\theta^*$ such that 
	\begin{equation*}
	\|\nabla u\|_D^2 \geq C_{\theta}^*\|u\|_D^2 - \theta\|u\|_{\partial D}^2,
	\end{equation*}
	and hence, we obtain a lower bound for left hand side of \eqref{aim}
	\begin{equation*}
	\begin{aligned}
	&\ (2\|\nabla u\|_D^2 + 2\lambda \|u\|_{\partial D}^2 + 2\beta \|u\|_{\partial D}^2 - \alpha^2\|u\|_{\partial D}^2)\|U(s)\|_H^2 + (2-\kappa)\alpha^2\|u\|_{\partial D}^4\\
	\geq&\ (2C_{\theta}^*\|u\|_D^2 + (2\lambda + 2\beta - \alpha^2 - 2\theta)\|u\|_{\partial D}^2)(\|u\|_D^2 + \|u\|_{\partial D}^2) + (2-\kappa)\alpha^2\|u\|_{\partial D}^4\\
	=&\ 2C_{\theta}^*\|u\|_D^4 + (2C_{\theta} + 2\lambda + 2\beta - \alpha^2 - 2\theta)\|u\|_{D}^2\|u\|_{\partial D}^2 + (2\lambda + 2\beta  - 2\theta + (1 - \kappa)\alpha^2)\|u\|_{\partial D}^4.
	\end{aligned}
	\end{equation*}
	Setting $X = \|u\|_D^2$ and $Y = \|u\|_{\partial D}^2$, we see that \eqref{aim} holds 
	if we can find $\alpha$ and $\theta$ fulfilling
	\begin{equation*}
	2C_{\theta}^*X^2 + (2C_{\theta}^* + 2\lambda + 2\beta - \alpha^2 - 2\theta)XY + (2\lambda + 2\beta - 2\theta 
	+ (1 - \kappa)\alpha^2))Y^2 > 2\beta(X+Y)^2,
	\end{equation*}
	or equivalently,
	\begin{equation*}
	2(C_\theta^* - \beta)X^2 + (2C_{\theta}^* + 2\lambda - 2\beta - \alpha^2 - 2\theta)XY + (2\lambda - 2\theta + (1 - \kappa)\alpha^2)Y^2 > 0
	\end{equation*}
	for all $X, Y>0$. 
	Moreover, since $\kappa\in(0,1)$ was arbitrary, it suffices that 
	\begin{equation*}
	2(C_\theta^* - \beta)X^2 + (2C_{\theta}^* + 2\lambda - 2\beta - \alpha^2 - 2\theta)XY + (2\lambda - 2\theta + \alpha^2)Y^2 > 0
	\end{equation*}
	for all $X, Y >0$, which is equivalent to the condition
	\begin{equation}\label{Z}
		2(C_\theta^* - \beta)Z^2 + (2C_{\theta}^* + 2\lambda - 2\beta - \alpha^2 - 2\theta)Z + (2\lambda - 2\theta + \alpha^2) > 0
	\end{equation}
	for all $Z>0$. To simplify notations we introduce
	\begin{equation*}
		A = C_\theta^* - \beta \quad \text{ and } \quad B = \theta - \lambda,
	\end{equation*}
	and rewrite  \eqref{Z} as  
	\begin{equation}\label{z0}
		2A Z^2 + (2A - 2B - \alpha^2)Z + \alpha^2 - 2B > 0 \qquad \forall  Z >0.
	\end{equation}
	Since $aX^2 + bX + c > 0\  \forall X>0 $  if and only if 
	\begin{equation*}
		 a>0, \quad c\geq 0 \quad \text{ and } \quad  b \geq -2\sqrt{ac},
	\end{equation*}
	\eqref{z0} is equivalent to the following set of conditions
	\begin{align}
			2A &> 0,\label{z1} \\
			\alpha^2 - 2 B &\geq 0,\label{z2}\\
			2A - 2B - \alpha^2 &> -2\sqrt{2A (\alpha^2 - 2B)}\label{z3}.
	\end{align}

	\begin{itemize}
		\item If $A >2B$ we distinguish two different cases.
		\begin{itemize}
			\item If $\alpha^2 < 2A - 2B$ then \eqref{z3} is automatically satisfied. Hence, together with \eqref{z2} 
			we obtain the following admissible range for $\alpha$
			\begin{equation*}
				%2B \leq \alpha^2 < 2(A - B)\quad \text{ or equivalently } \quad 
				B \leq \frac{\alpha^2}{2} < A - B.
			\end{equation*}
			\item If $\alpha^2 \geq 2A - 2B$, then taking the square of both sides of \eqref{z3} implies that 
			\begin{equation}\label{z4}
				\alpha^4 - 4(3A - B)\alpha^2 + 4(A+B)^2 < 0.
			\end{equation}
			A necessary condition for the existence of a solution $\alpha$ is, that the discriminant is positive, i.e.
			\begin{equation*}
				(-2(3A - B))^2 - 4(A +B)^2 = 32A (A - B) > 0,
			\end{equation*}
			which is true due to $A > 2B$ and \eqref{z1}. Hence, we solve \eqref{z4} and get
			\begin{equation*}
				Z_1 < \frac{\alpha^2}{2} < Z_2,
			\end{equation*}
			where
			\begin{equation}\label{Z12}
				Z_{1,2} := 3A - B \mp 2\sqrt{2A(A - B)}.
			\end{equation}
		\end{itemize}
		It is easy to check that $B < Z_1 < A - B$ and $Z_2 > A - B$. Therefore, in the case $A > 2B$ 
		the admissible range of noise intensities determined by \eqref{z1}--\eqref{z3} is
		\begin{equation*}
			B \leq \frac{\alpha^2}{2} < Z_2.
		\end{equation*}
		
		\item If $A \leq 2B$, then  \eqref{z2} implies that $\alpha^2 \geq 2B \geq A + (A-2B) = 2(A - B)$. 
		Moreover,  taking the square of both sides of \eqref{z3} we again obtain inequality \eqref{z4}. 
		Necessary for the existence of a solution $\alpha$ is that $A > B$, and hence, it follows that 
		\begin{equation*}
			Z_1 < \frac{\alpha^2}{2} < Z_2,
		\end{equation*}
		with $Z_{1,2}$ defined in \eqref{Z12}.
	\end{itemize}
	In conclusion, from  \eqref{aim} we derive the following admissible ranges for the noise intensity $|\alpha|$:
		\begin{itemize}
				\item $\text{If } \quad  A > 2B  \quad \text{ then } \quad B \leq \dfrac{\alpha^2}{2} < Z_2.$
				\item $\text{If } \quad 2B \geq A > B  \quad\text{ then } \quad Z_1 < \dfrac{\alpha^2}{2} < Z_2$.
		\end{itemize}
	This concludes the proof of Theorem \ref{pro}.
\end{proof}

Theorem \ref{pro} provides sufficient conditions for the stabilisation by noise on the boundary. 
Whether for a given domain $D$ and parameters $\beta$ and $\lambda$ a suitable $\theta$ 
exists crucially depends  on the optimal constant $C_\theta^*$. However, since $C_\theta^*$ 
is not explicitly known, these conditions are hard to verify. We 
remark that Theorem 3.2 remains valid if $C_\theta^*$ is replaced by another constant $C_\theta$ 
satisfying \eqref{func_ineq}. Hence, in the following we use the expression for
the constant $C_\theta$ in Lemma \ref{TrIneq} to derive explicit conditions for the
exponential stability of the zero solution that are determined by $\alpha, \beta, d$ and 
the diameter of the domain $D$. We recall that in Lemma \ref{TrIneq} for every $\theta\geq 0$ the constant $C_\theta$ is given by
\begin{equation*}
	C_\theta = \begin{cases}
				\theta\left(d/R - \theta\right) &\text{ if } \theta \in \left[0,\frac{d}{2R}\right),\\
				\frac{d^2}{4R^2} &\text{ if } \theta \in \left[\frac{d}{2R},\infty\right),
				\end{cases}
\end{equation*}
where $R>0$ is such that  $2R = \mathrm{diam}(D)$. 
Note that $C_\theta \leq d^2/4R^2$ for all $\theta\geq 0$. 

\begin{theorem}\label{main}\hfil
\begin{itemize}
	\item[(a)] Persistence of stability: \ 
	If $\beta < C_{\lambda}$,  
	%we can choose $\theta=\lambda$ in Theorem \ref{main} (ii) and 
	%consequently, 
	the zero steady state is exponentially stable for all noise
	intensities $|\alpha|$ such that
	$$
	\frac{\alpha^2}{2} \in \left[0, (3+2\sqrt{2})(C_\lambda-\beta)\right).
	$$
%	\item[(b)] If $\beta \geq d^2/4R^2$, Theorem \ref{main} does not give any range of noise which could stabilise the equation in this case.
	\item[(b)] Stabilisation by noise: In case $\beta \geq C_\lambda$, if
	\begin{equation}\label{cond_bl}
		0 < \lambda < \frac 12\left(d/R - 1\right) \quad \text{ and } \quad \beta < \frac 14\left(d/R -1\right)^2 + \lambda,
	\end{equation}
	then there exists $\theta$ such that
	\begin{equation}\label{cond}
		C_\theta - \beta > \theta - \lambda >0,
	\end{equation}
	and consequently,  the zero solution of 
		\eqref{eq} is exponentially stable for all noise intensities $|\alpha|$ such that 
	\begin{equation*}
		\frac{\alpha^2}{2}\in
		\begin{cases}
			[\theta-\lambda, Z_2) &\text{ if }\  C_\theta - \beta > 2(\theta - \lambda),\\
			(Z_1, Z_2) &\text{ if }\  2(\theta - \lambda) \geq C_\theta - \beta,
		\end{cases}
	\end{equation*}
	where $Z_{1,2}$ are defined in \eqref{Z12} with $C_\theta$ in place of $C_\theta^*$.
\end{itemize}

\begin{remark}
	For the condition \eqref{cond_bl} to hold it is necessary that $d > R$, which means that we can 
	only show stabilisation if the diameter of the domain is not too large in comparison to the dimension.
\end{remark}
\begin{proof}
	For (a) we can choose $\theta = \lambda$ in Theorem \ref{pro}, and replace $C_\theta^*$ by $C_\theta$ to obtain the desired range of $\alpha$. 
	
	From Theorem \ref{pro}, again with $C_\theta$ in place of $C_\theta^*$, we know that if \eqref{cond} is satisfied 
	then the equation can be stabilised. To ensure that the set of parameters $\lambda$ and $\beta$ satisfying \eqref{cond_bl} 
	is not empty, we first observe that \eqref{cond_bl} implies that $\beta<\frac{d^2}{4R^2}$.
	Moreover, we need that
	$$
	\frac{1}{4}(d/R - 1)^2 + \lambda > C_\lambda = \lambda(d/R - \lambda),
	$$ 
	since $\lambda < \frac{1}{2}(d/R - 1) < d/2R$. 
	But this condition is equivalent to $(\lambda -\frac{1}{2}(d/R - 1))^2 > 0,$ which is obviously true due to \eqref{cond_bl}.

	Our goal now is to show that the hypotheses \eqref{cond_bl} imply the existence of some $\theta \leq d/2R$ satisfying \eqref{cond}. Indeed, since $\theta \leq d/2R$, we use that $C_\theta = \theta(d/R - \theta)$ by Lemma \ref{TrIneq} and rewrite the condition \eqref{cond} as
	\begin{equation*}
		\begin{cases}
			d/2R \geq \theta > \lambda,\\
			\theta^2 + (1 - d/R)\theta + \beta -\lambda <0.
		\end{cases}
	\end{equation*}
	This system is equivalent to
	\begin{equation*}
		d/2R \geq \theta > \lambda \quad \text{ and } \quad \frac{d/R - 1 - \sqrt\Psi}{2} < \theta < \frac{d/R - 1+\sqrt{\Psi}}{2},
	\end{equation*}
	where
	\begin{equation*}
			\Psi:= (1-d/R)^2 - 4(\beta - \lambda)>0
	\end{equation*}
	due to the condition on $\beta$ in \eqref{cond_bl}. It is obvious that
	\[
		\frac{d}{2R} > \frac{d/R - 1 - \sqrt{\Psi}}{2}.
	\]
	Moreover, by \eqref{cond_bl} it also follows that
	\begin{equation*}
		\lambda < \frac{d/R - 1 + \sqrt{\Psi}}{2}.
	\end{equation*}
	Therefore, \eqref{cond} is satisfied for all $\theta$ within the interval
	\begin{equation*}
		\max\left\{\lambda;\frac{d/R-1+\sqrt{\Psi}}{2}\right\} < \theta < \min\left\{\frac{d}{2R}; \frac{d/R-1+\sqrt{\Psi}}{2}\right\},
	\end{equation*}
	which proves part (b).
\end{proof}
\end{theorem}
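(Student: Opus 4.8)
The plan is to deduce Theorem \ref{main} directly from Theorem \ref{pro}, using the preceding remark that Theorem \ref{pro} stays valid if the optimal constant $C_\theta^*$ is replaced by any constant $C_\theta$ satisfying the Poincar\'e-Trace inequality \eqref{func_ineq}, in particular the explicit constant of Lemma \ref{TrIneq}. Under this substitution, once a suitable $\theta$ has been exhibited, the admissible ranges for $\alpha^2/2$ are precisely those furnished by Theorem \ref{pro}, so the entire task reduces to producing such a $\theta$ from the hypotheses. For part (a) I would simply take $\theta = \lambda$. Then $C_\theta - \beta = C_\lambda - \beta > 0$ while $\theta - \lambda = 0$, so with the notation $A = C_\lambda - \beta$, $B = \theta - \lambda = 0$ the first alternative $C_\theta - \beta > 2(\theta - \lambda)$ holds. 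The lower endpoint $\max\{0,\theta-\lambda\}$ collapses to $0$, and the upper endpoint becomes $Z_2 = 3A + 2\sqrt{2A\cdot A} = (3+2\sqrt 2)(C_\lambda - \beta)$, which is exactly the claimed interval.

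For part (b), the substance of the proof is to show that the hypotheses \eqref{cond_bl} force the existence of a $\theta$ satisfying \eqref{cond}. I would look for $\theta$ in the regime $\theta \le d/2R$, where Lemma \ref{TrIneq} gives the explicit form $C_\theta = \theta(d/R - \theta)$. Inserting this into the inequality $C_\theta - \beta > \theta - \lambda$ and rearranging converts \eqref{cond} into the quadratic inequality
\[
	\theta^2 + (1 - d/R)\theta + \beta - \lambda < 0,
\]
subject to the constraint $\lambda < \theta \le d/2R$. This quadratic has two real roots $\tfrac{d/R - 1 \mp \sqrt{\Psi}}{2}$ precisely when its discriminant $\Psi := (1-d/R)^2 - 4(\beta - \lambda)$ is positive, and positivity of $\Psi$ is exactly the second inequality in \eqref{cond_bl}; between these roots the quadratic is negative.

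The main (and only delicate) obstacle is then the purely bookkeeping step of checking that this negativity interval meets the constraint window $(\lambda, d/2R]$, i.e. that
\[
	\max\Big\{\lambda,\ \tfrac{d/R - 1 - \sqrt{\Psi}}{2}\Big\} < \min\Big\{\tfrac{d}{2R},\ \tfrac{d/R - 1 + \sqrt{\Psi}}{2}\Big\}.
\]
Here I would verify the four pairwise comparisons: $\tfrac{d}{2R} > \tfrac{d/R-1-\sqrt{\Psi}}{2}$ is immediate since the difference equals $\tfrac{1+\sqrt{\Psi}}{2} > 0$, while the first hypothesis $\lambda < \tfrac12(d/R - 1)$ yields both $\lambda < d/2R$ and $\lambda < \tfrac{d/R - 1 + \sqrt{\Psi}}{2}$ (using $\sqrt{\Psi} > 0$). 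It is also convenient to record first the consistency observations that \eqref{cond_bl} implies $\beta < d^2/4R^2$ and $\tfrac14(d/R-1)^2 + \lambda > C_\lambda$ (the latter being equivalent to $(\lambda - \tfrac12(d/R-1))^2 > 0$), which confirm that the admissible parameter set is non-empty. Having produced an admissible $\theta$, I would feed it back into Theorem \ref{pro} with $C_\theta$ in place of $C_\theta^*$; its two cases $C_\theta - \beta > 2(\theta-\lambda)$ and $2(\theta-\lambda) \ge C_\theta - \beta$ then deliver the stated ranges $[\theta-\lambda, Z_2)$ and $(Z_1,Z_2)$ for $\alpha^2/2$, completing the proof.
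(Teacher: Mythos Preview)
Your proposal is correct and follows essentially the same route as the paper: for (a) you set $\theta=\lambda$ and read off $Z_2=(3+2\sqrt2)(C_\lambda-\beta)$, and for (b) you reduce \eqref{cond} to the quadratic $\theta^2+(1-d/R)\theta+\beta-\lambda<0$ on $(\lambda,d/2R]$, invoke \eqref{cond_bl} to get $\Psi>0$, and then check that the root interval meets $(\lambda,d/2R]$ via the same elementary comparisons the paper uses. Your inclusion of the consistency checks ($\beta<d^2/4R^2$ and $\tfrac14(d/R-1)^2+\lambda>C_\lambda$) also mirrors the paper exactly.
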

In Section \ref{sec:1D} we consider a concrete one-dimensional  example for the stabilisation by noise in (b), where the trivial solution of the deterministic problem is unstable, but adding noise on the boundary with appropriate intensity leads to stabilisation.

\medskip
\noindent{\bf Discussion.} 
\begin{itemize}
\item
The results of Theorem \ref{main} can be interpreted as follows:
\begin{itemize}
	\item 
	If $\beta < C_{\lambda}\leq C_\lambda^*$, the zero steady state of the deterministic equation is exponentially stable due to Theorem \ref{determin}. 
	Theorem \ref{main} shows that the stability  is preserved for the stochastic problem, if the intensity of the noise is not too large; more precisely, for $\frac{\alpha^2}{2} \in [0, (3+2\sqrt{2})(C_\lambda-\beta))$.
	
	\item 
	If $\beta \geq C_{\lambda}$, the zero solution of the unperturbed deterministic problem might be unstable, 
	while for $\beta > C_\lambda^*$ the zero solution is unstable by Theorem \ref{determin}.
	In this case, Theorem \ref{main} shows that for sufficiently small domains and the  parameter ranges \eqref{cond_bl}, adding 
	noise with suitable intensity implies stability. 
	In fact, by choosing $\theta>0$ such that \eqref{cond} holds,
	the zero solution of the stochastic problem is exponentially stable if the intensity of the noise is within the stated range. 
	In this case, the lower bound for the noise intensity is strictly positive.
\end{itemize}	
	\item 
	These results are essentially different from related results in the literature on the stabilisation of parabolic PDEs by noise. 
	In case of  homogeneous Dirichlet boundary conditions and 
	a multiplicative It\^o noise $\alpha u\, dW_t$ acting 
	inside the domain, the noise typically helps to stabilise the system. More precisely, the zero solution is exponentially stable 
	for all sufficiently large intensities $|\alpha|$ (see e.g., \cite{CCLR07,CKS06,Kwi99,CLM01}).
	
	With the current technique, we are able to prove the stabilisation by noise on the boundary only for a finite range of $\alpha$. 
	Whether it is always possible to stabilise equation \eqref{eq}, or whether a noise with higher intensity
	destroys the stability of the zero steady state remain interesting open questions. 
			
	\item 
	The conditions in Theorem \ref{main} essentially depend on the constant $C_\theta$ defined in Lemma \ref{TrIneq}, and thus, implicitly depend on the geometry of the domain $D$. These conditions are obviously not optimal since $C_\theta \leq C_\theta^*$. Nevertheless,  Lemma \ref{TrIneq} yields an explicit 
	expression for the constant $C_\theta$ which allows to specify parameter ranges for $\lambda$ and $\beta$ 
	for which stabilisation by noise can be shown. 
	These ranges increase with decreasing diameter of the domain, or in other words, it is easier to stabilise by boundary noise as the domain gets smaller.
	%Below we show that  the required conditions are satisfied for certain ranges of $\lambda$ 
	
	\item If $\beta \geq C_{\theta,\max}^*$ (see Remark \ref{remark1} (iii)), the hypotheses of Theorem \ref{pro} are never satisfied, and our method of proof does not apply. However, due to the existence of an upper bound for $C_\theta^*$, 
	we conjecture that there is indeed a threshold value  $\beta_{\mathrm{crit}}$ so that \eqref{eq} 
	cannot be stabilised by noise on the boundary if $\beta\geq\beta _{\mathrm{crit}}$.
	
%	\noindent{\bf Conjecture:} 
	\begin{conjecture*}
		There exists $\beta_{\mathrm{crit}} > 0$ such that if $\beta \geq \beta_{\mathrm{crit}}$, 
		the zero solution of \eqref{e1} is unstable for any $\lambda>0$ and $\alpha \in \mathbb R$.
	\end{conjecture*}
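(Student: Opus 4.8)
Since the final statement is a conjecture, I outline the strategy I would pursue and indicate where the genuine difficulty lies. The natural candidate for the threshold is $\beta_{\mathrm{crit}} = \lambda_1$, the first Dirichlet eigenvalue of $-\Delta$ on $D$ from Remark \ref{remark1}(iii): because $C_\theta^*$ is non-decreasing in $\theta$ with $C_\theta^* \uparrow \lambda_1$ as $\theta\to\infty$, the hypotheses of Theorem \ref{pro} can never be met once $\beta \geq \lambda_1$, so $C_{\theta,\max}^* = \lambda_1$ is exactly the value beyond which the stabilisation mechanism is exhausted.

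The plan is to turn the \emph{degeneracy} of the noise $C(U) = (0,\alpha u)$ into an advantage by testing against a direction the noise cannot see. Let $\phi_1 \in H_0^1(D)$ be the positive first Dirichlet eigenfunction, $-\Delta\phi_1 = \lambda_1\phi_1$ in $D$ and $\phi_1 = 0$ on $\pa D$, and set $\Phi_1 = (\phi_1,0)\in V$. Testing \eqref{EqAb} against $\Phi_1$, the stochastic part drops out since $\langle C(U),\Phi_1\rangle = \alpha\langle u,\phi_1\rangle_{\pa D} = 0$ as $\phi_1|_{\pa D}=0$, so $N(t) := \langle U(t),\Phi_1\rangle = \int_D u(t)\phi_1\,dx$ satisfies the \emph{pathwise} equation
\begin{equation*}
\frac{d}{dt}N = (\beta-\lambda_1)N - \int_{\pa D}u\,\pa_\nu\phi_1\,dS - \int_D u^3\phi_1\,dx,
\end{equation*}
obtained from Green's identity together with $-\Delta\phi_1 = \lambda_1\phi_1$; by Hopf's lemma $\pa_\nu\phi_1 < 0$ on $\pa D$, hence $-\int_{\pa D}u\,\pa_\nu\phi_1\,dS = \int_{\pa D}u\,|\pa_\nu\phi_1|\,dS$.

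First I would treat the linearisation (dropping $u^3$) and start from data with $u_0\ge 0$ on $D$, $\phi\ge 0$ on $\pa D$ and $N(0)>0$. The decisive auxiliary result is a \emph{positivity-preservation} principle for \eqref{eq}: nonnegative initial data produce a solution with $u(t)\ge 0$ a.s. Granting it, the boundary term is nonnegative, so $\tfrac{d}{dt}N\ge(\beta-\lambda_1)N$ and $N(t)\ge N(0)\,e^{(\beta-\lambda_1)t}$. Since $\|U(t)\|_H\ge N(t)/\|\phi_1\|_D$, this forces exponential growth whenever $\beta>\lambda_1$, for \emph{every} $\lambda>0$ and \emph{every} $\alpha\in\R$; in particular the top Lyapunov exponent of the linearisation is at least $\beta-\lambda_1>0$. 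To reach the full problem \eqref{eq} I would argue locally: while $\|U\|_H$ is small the cubic term $\int_D u^3\phi_1$ is of higher order, so $N$ keeps growing until the solution has left a fixed neighbourhood of zero, which is instability; alternatively one invokes the stochastic counterpart of the Hartman--Grobman/unstable-manifold theorem used in Theorem \ref{determin}. This would establish the conjecture with $\beta_{\mathrm{crit}}=\lambda_1$.

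The hard part is the positivity-preservation principle. The multiplicative structure $\alpha u\,dW_t$ is itself sign-preserving on the boundary---a scalar SDE $dy = ay\,dt+\alpha y\,dW_t$ never crosses zero---but the boundary trace is coupled through $\pa_\nu u$ to the interior, and one must show that this coupling, together with the It\^o correction, does not create negativity. I would attempt a stochastic comparison principle, or an energy estimate for $u^-$ showing $\mathbb{E}\|u^-(t)\|_D^2 = 0$, in which the boundary noise contributes only a term controlled by $\|u^-\|_{\pa D}^2$ and is absorbed by Gronwall. Carrying this out \emph{uniformly in} $\alpha$, in particular for large $\alpha$ where the boundary fluctuations are violent, is the principal obstacle and the reason the statement remains conjectural. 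Should positivity fail, the fallback is ergodic: write $\Lambda=\int_{S}Q\,d\mu$ over the invariant measure $\mu$ of the projected process on the unit sphere and show that $Q$ is dominated by $2\beta\,\|u\|_D^2/\|U\|_H^2$ for large $\beta$; this, however, needs an upper bound on $\mathbb{E}_\mu[\|\nabla u\|_D^2/\|U\|_H^2]$ uniform in $\alpha$, which seems no easier.
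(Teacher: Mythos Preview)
The paper does not prove this statement: it is explicitly stated as a \emph{conjecture} and left open, with only the heuristic observation that the hypotheses of Theorem~\ref{pro} fail once $\beta\ge C_{\theta,\max}^*$. There is therefore no paper proof to compare against; your write-up is, appropriately, a strategy rather than a proof, and you are upfront about this.

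Your central idea---testing the abstract equation against $\Phi_1=(\phi_1,0)\in V$ with $\phi_1$ the first Dirichlet eigenfunction, so that the noise term $\langle C(U),\Phi_1\rangle$ vanishes identically---is sharp and goes well beyond anything the paper offers. It turns the degeneracy of the boundary noise into a genuine mechanism for instability and singles out $\beta_{\mathrm{crit}}=\lambda_1$ naturally. The resulting pathwise ODE for $N(t)$ is correct, and the sign analysis via Hopf's lemma is the right move.

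Two comments on the gaps you identify. First, the positivity step may be less fragile than you suggest. For a Gronwall argument on $\mathbb{E}\|U^-\|_H^2$, one does \emph{not} need estimates uniform in~$\alpha$: applying (a regularised) It\^o formula yields $\tfrac{d}{dt}\mathbb{E}\|U^-\|_H^2\le K(\alpha)\,\mathbb{E}\|U^-\|_H^2$ for each fixed~$\alpha$, and since $\|U^-(0)\|_H=0$ this forces $\mathbb{E}\|U^-(t)\|_H^2\equiv 0$ regardless of how large $K(\alpha)$ is. The genuine technical work is justifying It\^o's formula for the non-$C^2$ map $U\mapsto\|U^-\|_H^2$ in this mixed bulk--boundary setting, not controlling the constant. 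Second, the more serious obstacle is the step you pass over quickly: transferring linear instability to the nonlinear stochastic problem. The deterministic Hartman--Grobman theorem used in Theorem~\ref{determin} has no off-the-shelf stochastic analogue here; existing results for random dynamical systems typically require a cocycle structure and a multiplicative ergodic theorem, neither of which is immediate for \eqref{eq} with its degenerate boundary noise. Your local alternative (the cubic is higher order while $\|U\|_H$ is small) is the more promising route, but making ``$N$ keeps growing until the solution leaves a fixed neighbourhood'' rigorous for \emph{almost every} path, with the stochastic boundary term present, is where the real work would lie.
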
	
	
\end{itemize}

%%%%%%%%%%%%%%%%	
\section{Stabilisation by noise inside the domain}\label{sec:Stochastic2}
%%%%%%%%%%%%%%%%

We now analyse whether the results on stabilisation change if the noise acts inside the domain for noise free dynamical boundary conditions. Using the same notations as in the previous section
we consider the following stochastic It\^ o problem
\begin{equation}\label{eq_s2}
		\begin{cases}
			d u + (- \Delta u + u^3 - \beta u)dt  = \alpha u\, dW_t &\text{in}\ Q_T,\\
			d u + (\partial_{\nu}u + \lambda u)dt = 0 &\text{on}\ S_T,\\
			u(x,0) = u_0(x), & x\in D,\\
			u(x,0) = \phi(x), & x\in \partial D.
		\end{cases}
\end{equation}
Similarly to \eqref{EqAb}, it can be rewritten in the abstract form
\begin{equation}\label{EqAb_s2}
	\begin{cases}
		dU + (AU + B(U))dt = \widetilde C(U)dW_t,\\
		U(0) = U_0 = (u_0, \phi)\in H, 
	\end{cases}
\end{equation}
where $A$ and $B$ were defined in the beginning of Section \ref{sec:Stochastic}.
The stochastic perturbation $\widetilde C:H\to H$ is given by $\widetilde C(U) = (\alpha u, 0)$, i.e., 
it acts on the first component of the solution, and different from \eqref{oper-estimate} we now have
\begin{equation*}
	\langle \widetilde C(U), U \rangle  = \alpha \|u\|_D^2,\qquad \forall U\in H.
\end{equation*}

The existence of strong solutions follows again from \cite[Theorem 4.1]{Par79}. 
\begin{theorem}
		Let $T>0$ and assume that the initial data $U_0 = (u_0, \phi)$ is an $H$-valued 
		$\mathcal F_0$-measurable random variable satisfying $\mathbb{E}\|U_0\|_{H}^2 <\infty$. 
		Then, there exists a unique strong solution of \eqref{EqAb_s2} in $C([0,T]; H)$ such that
		\begin{equation*}
			\mathbb{E}\Big(\sup_{t\in [0,T]}\|U(t)\|_{H}^2\Big) < \infty \quad \text{ and } \quad 
			\mathbb{E}\Big(\int_0^T\|U(t)\|_{V}^2dt\Big) < \infty.
		\end{equation*}
\end{theorem}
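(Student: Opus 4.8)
The plan is to verify that the abstract equation \eqref{EqAb_s2} fits the variational framework for stochastic evolution equations of Pardoux and then to invoke \cite[Theorem 4.1]{Par79}, exactly as for the boundary-noise problem \eqref{EqAb}. The only structural change is that the diffusion coefficient $\widetilde C(U) = (\alpha u, 0)$ now acts on the first component, so that $\langle \widetilde C(U), U\rangle = \alpha\|u\|_D^2$ and $\|\widetilde C(U)\|_H^2 = \alpha^2\|u\|_D^2$; in particular the noise term is controlled directly by the $L^2(D)$-part of the $H$-norm rather than by a boundary trace. Consequently every estimate is at least as simple as in Section \ref{sec:Stochastic}, and I would keep the same Gelfand-triple setup $V\hookrightarrow H\hookrightarrow V^*$ together with the identities \eqref{AB}. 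Because the cubic term forces the drift $AU + B(U)$ to take values in the larger space $V^* + (L^{4/3}(D)\times L^2(\pa D))$ (note that $u^3\in L^{4/3}(D)$ when $u\in L^4(D)$), the whole argument is carried out for the exponent $p = 4$ in this sum space, and the duality pairings below are understood accordingly.

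Writing $\mathcal F(U) := AU + B(U)$, I would check the four standard hypotheses. \emph{(i) Hemicontinuity:} the map $s\mapsto \langle \mathcal F(U+sV), W\rangle$ is continuous for fixed $U,V,W$, which follows from the linearity of $A$ and the continuity of $u\mapsto u^3$. \emph{(ii) Weak monotonicity:} since $x\mapsto x^3$ is nondecreasing, $\langle u^3 - v^3, u-v\rangle_D \geq 0$, and together with the positivity of $A$ this yields the one-sided Lipschitz estimate
\[
2\langle \mathcal F(U) - \mathcal F(V), U - V\rangle - \|\widetilde C(U) - \widetilde C(V)\|_H^2 \geq -(2\beta + \alpha^2)\|U-V\|_H^2 .
\]
\emph{(iii) Coercivity:} from \eqref{AB} and $\|\widetilde C(U)\|_H^2 = \alpha^2\|u\|_D^2$,
\[
2\langle \mathcal F(U), U\rangle - \|\widetilde C(U)\|_H^2 = 2\|\nabla u\|_D^2 + 2\lambda\|u\|_{\pa D}^2 + 2\|u\|_{4,D}^4 - (2\beta + \alpha^2)\|u\|_D^2 ,
\]
whose right-hand side bounds $c(\|U\|_V^2 + \|u\|_{4,D}^4)$ from below up to a term $-C\|U\|_H^2$; here the Poincar\'e-Trace inequality \eqref{func_ineq} is used to control $\|u\|_D^2$ by $\|\nabla u\|_D^2 + \|u\|_{\pa D}^2$, precisely as in the proof of Theorem \ref{determin}. \emph{(iv) Boundedness and growth:} $A$ is a bounded linear operator $V\to V^*$, and $\|B(U)\|_{L^{4/3}(D)\times L^2(\pa D)} \leq C(1 + \|u\|_{4,D}^3)$, which is the expected growth of order $p-1 = 3$.

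With (i)--(iv) established, \cite[Theorem 4.1]{Par79} yields a unique strong solution $U\in L^2(\Omega; C([0,T]; H))$ that additionally belongs to $L^2(\Omega\times[0,T]; V)\cap L^4(\Omega\times Q_T; L^4(D))$. The two a priori bounds then follow from the It\^o formula applied to $\|U(t)\|_H^2$: integrating the coercivity estimate (iii) in time gives $\mathbb E\int_0^T\|U\|_V^2\,dt < \infty$, while taking the supremum over $t$ and estimating the resulting stochastic integral by the Burkholder-Davis-Gundy inequality gives $\mathbb E\big(\sup_{t\in[0,T]}\|U\|_H^2\big) < \infty$. I expect the main obstacle to lie not in the existence argument itself, which is routine once the framework is fixed, but in the functional-analytic bookkeeping imposed by the cubic nonlinearity: the drift does not map $V$ into $V^*$, so one must work consistently in $V^* + (L^{4/3}(D)\times L^2(\pa D))$, justify the duality pairings there, and verify the validity of It\^o's formula in this non-reflexive setting, all of which parallels the corresponding step for the boundary-noise problem in Section \ref{sec:Stochastic}.
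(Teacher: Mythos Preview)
Your proposal is correct and follows exactly the route the paper takes: the paper does not give a proof at all but simply states that existence and uniqueness ``follows again from \cite[Theorem 4.1]{Par79}'', which is precisely the reference you invoke. Your write-up supplies the verification of Pardoux's hypotheses (hemicontinuity, monotonicity, coercivity, growth) that the paper leaves implicit, so your argument is a fleshed-out version of the same approach rather than a different one.
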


\begin{theorem}\label{pro1}
Let $C_\theta^*$ denote the optimal constant in Lemma \ref{TrIneq} and the initial data $U_0\in L^ 2(\Omega,\mathcal{F}_0,P;H)$ 
be such that $\|U_0\|_H\neq 0$ $P$-a.s..

	Assume that there exists $\theta> 0$ such that  
	\begin{equation}\label{c1}
		\lambda - \theta > \beta - C_\theta^*> 0,
	\end{equation}
	or
	\begin{equation*}
		\lambda>\theta \quad \text{and} \quad C_\theta^*\geq \beta.
	\end{equation*}
	Then, the solution $U(t)$ of \eqref{EqAb_s2} satisfies
	\begin{equation*}
		\limsup_{t\to\infty}\frac{1}{t}\log\|U(t)\|_H^2 < 0\qquad P\text{-a.s.}
	\end{equation*}
	for all noise  intensities $|\alpha|$ such that 
	\begin{equation*}
		\frac{\alpha^2}{2} \in
		\begin{cases}
			[\max\{0, \beta - C_\theta^*\}, T_2) &\text{ if } \quad \lambda - \theta > 2(\beta - C_\theta^*),\\
			(T_1, T_2) &\text{ if } \quad 2(\beta - C_\theta^*) \geq \lambda - \theta > \beta - C_\theta^*,
		\end{cases}
	\end{equation*}
	where 
	\begin{equation*}
		T_{1,2} := C_\theta^* - \beta + 3(\lambda - \theta) \mp 2\sqrt{2(\lambda-\theta)(\lambda - \theta + C_\theta^* - \beta)}.
	\end{equation*}
\end{theorem}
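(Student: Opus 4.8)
The plan is to mirror the proof of Theorem \ref{pro} essentially line by line, the only structural change being that the noise operator $\widetilde C(U)=(\alpha u,0)$ now acts on the first component, so that $\langle \widetilde C(U),U\rangle = \alpha\|u\|_D^2$ and $\|\widetilde C(U)\|_H^2 = \alpha^2\|u\|_D^2$ both involve the interior norm $\|u\|_D$ rather than the boundary norm $\|u\|_{\partial D}$. First I would apply It\^o's formula to $\psi(U)=\log\|U\|_H^2$ exactly as in \eqref{e1}, using the derivatives $\psi'$, $\psi''$ computed there; substituting the identities for $\widetilde C$ reproduces \eqref{e1} with It\^o correction $\tfrac12\alpha^2\|u\|_D^2$ in the drift and quadratic-variation term $\tfrac{\alpha^2\|u\|_D^4}{\|U\|_H^4}$. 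Controlling the stochastic integral by the exponential martingale inequality together with the Borel--Cantelli lemma, precisely as in \eqref{martingale}, introduces parameters $\kappa\in(0,1)$ and $k\in\N$ and yields the interior-noise analogue of \eqref{e1_0}.

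Next I would discard the nonnegative term $\|u\|_{4,D}^4$ and extract the linear-in-$t$ contribution by the same decomposition $-\beta\|u\|_D^2=-\beta\|U\|_H^2+\beta\|u\|_{\partial D}^2$ used in Theorem \ref{pro}, which again produces a term $2\beta t$ together with a boundary contribution $+2\beta\|u\|_{\partial D}^2$. The problem then reduces, exactly as for \eqref{aim}, to exhibiting a range of $\alpha$ for which
\begin{equation*}
	(2\|\nabla u\|_D^2 + 2(\lambda+\beta)\|u\|_{\partial D}^2 - \alpha^2\|u\|_D^2)\|U\|_H^2 + (2-\kappa)\alpha^2\|u\|_D^4 > 2\beta\|U\|_H^4.
\end{equation*}
Applying the Poincar\'e--Trace bound $\|\nabla u\|_D^2\geq C_\theta^*\|u\|_D^2-\theta\|u\|_{\partial D}^2$ from Lemma \ref{TrIneq} and writing $X=\|u\|_D^2$, $Y=\|u\|_{\partial D}^2$, I obtain a homogeneous quadratic in $X,Y$; here the crucial difference from Theorem \ref{pro} is that the noise now contributes $(1-\kappa)\alpha^2$ to the coefficient of $X^2$ instead of $Y^2$.

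Dividing by $X^2$ and setting $Z=Y/X>0$ (rather than $X/Y$ as in Theorem \ref{pro}), and letting $\kappa\to0$ since $\kappa\in(0,1)$ was arbitrary, I would reduce the requirement to the single-variable inequality of the same form as \eqref{z0},
\begin{equation*}
	2A Z^2 + (2A - 2B - \alpha^2)Z + \alpha^2 - 2B > 0 \qquad \forall Z>0,
\end{equation*}
but now with the substitution $A=\lambda-\theta$ and $B=\beta-C_\theta^*$, so that the roles of the two structural constants $C_\theta^*-\beta$ and $\theta-\lambda$ are interchanged. This swap is precisely why the hypotheses read $\lambda-\theta>\beta-C_\theta^*>0$ in place of $C_\theta^*-\beta>\theta-\lambda>0$. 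The remainder is then verbatim the algebra following \eqref{z0}: conditions \eqref{z1}--\eqref{z3} translate into $\lambda>\theta$, $\alpha^2\geq2(\beta-C_\theta^*)$, and the discriminant inequality, and solving them produces the stated intervals with endpoints $T_{1,2}=3A-B\mp2\sqrt{2A(A-B)}$, which is exactly $T_{1,2}=C_\theta^*-\beta+3(\lambda-\theta)\mp2\sqrt{2(\lambda-\theta)(\lambda-\theta+C_\theta^*-\beta)}$ after back-substitution. The second hypothesis $\lambda>\theta$, $C_\theta^*\geq\beta$ falls into the first branch, since then $B=\beta-C_\theta^*\leq0<A$ forces $A>2B$ and $\max\{0,\beta-C_\theta^*\}=0$.

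I do not expect a genuinely new obstacle, as the argument is a faithful transcription of Theorem \ref{pro}. The only point demanding care is the bookkeeping of which of the two norms $\|u\|_D$, $\|u\|_{\partial D}$ carries the noise, since this determines whether $\alpha^2$ lands in the $X^2$ or the $Y^2$ coefficient and hence which pair of constants plays the role of $(A,B)$ in \eqref{z0}; getting this swap right is exactly what makes the admissible ranges and the hypotheses come out as stated.
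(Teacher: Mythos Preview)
Your proposal is correct and follows essentially the same approach as the paper. The paper's proof skips the extraction of the $2\beta t$ term and directly requires the full integrand to be negative, but this leads to the identical quadratic inequality \eqref{y0}; and the paper keeps $A=C_\theta^*-\beta$, $B=\lambda-\theta$ and then remarks that the system is solved ``analog (with $(-B,A)$ instead of $(A,B)$)'', which is exactly your substitution $A=\lambda-\theta$, $B=\beta-C_\theta^*$ written from the other side.
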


\begin{proof} 
%\textcolor{magenta}{
%To prove: $U(t)\neq0$ $P$-a.s. for all $t\geq 0.$ ! (or add as an assumption)
%}

Following the same arguments as in the proof of Theorem  \ref{pro}, by applying the 
It\^o formula to $\log \|U(t)\|_H^2$ and using the exponential martingale inequality we obtain
\begin{equation*}
\begin{aligned}
	&\quad \log \|U(t)\|_H^2\\
	&\leq \log \|U(0)\|_H^2 + 4\frac{\log k}{\kappa} -\int_0^t\frac{2\langle AU+B(U), U\rangle - \| \widetilde C(U)\|_H^2}{\|U\|_H^2}ds- (2-\kappa)\int_0^t\frac{\langle  \widetilde C(U), U\rangle^2}{\|U\|_H^4}ds\\
	&\leq \log \|U(0)\|_H^2 + 4\frac{\log k}{\kappa}\\
	&\quad +\int_0^t\frac{2\|U\|_H^2(-\|\nabla u\|_D^2 - \lambda\|u\|_{\partial D}^2+\beta\|u\|_D^2) + \alpha^2\|u\|_D^2\|U\|_H^2 - (2-\kappa)\alpha^2\|u\|_D^4}{\|U\|_H^4}ds.
\end{aligned}
\end{equation*}
Now, we need that 
\begin{equation*}
	2\|U\|_H^2(-\|\nabla u\|_D^2 - \lambda\|u\|_{\partial D}^2+\beta\|u\|_D^2) + \alpha^2\|u\|_D^2\|U\|_H^2 - (2-\kappa)\alpha^2\|u\|_D^4 < 0,
\end{equation*}
where $\kappa \in (0,1)$. Since $\kappa\in(0,1)$ is arbitrary, and  
using the functional inequality $\|\nabla u\|_D^2 + \theta \|u\|_{\partial D}^2 \geq C_\theta^* \|u\|_D^2$ in Lemma \ref{TrIneq}, 
it is sufficient that 
\begin{equation}\label{y0}
	[2(C_\theta^* - \beta) + \alpha^2]\|u\|_D^4 + [2(C_\theta^* - \beta + \lambda -\theta) - \alpha^2]\|u\|_D^2\|u\|_{\partial D}^2 + 2(\lambda - \theta)\|u\|_{\partial D}^4 > 0.
\end{equation}
To shorten notations we set $A = C_\theta^* - \beta$ and $B = \lambda - \theta$.  
As in the proof of Theorem \ref{pro} we conclude that \eqref{y0} is equivalent to the set of 
conditions
\begin{align*}
	B &> 0,\\
	\alpha^2 + 2A &\geq 0,\\
	2(A+B)-\alpha^2 &> -2\sqrt{2B(\alpha^2 + 2A)}.
\end{align*}
We solve this system of inequalities analog (with $(-B,A)$ instead of $(A,B)$) as in the proof of Proposition \ref{pro}, 
in fact, by replacing the constants we obtain the following admissible ranges of noise intensities:
		\begin{itemize}
				\item $\text{If } \quad  B > -2A  \quad \text{ then } \quad \max\{0, -A\} \leq \dfrac{\alpha^2}{2} < T_2.$
				\item $\text{If } \quad -2A \geq B > -A  \quad\text{ then } \quad T_1 \leq \dfrac{\alpha^2}{2} < T_2$.
		\end{itemize}
		Here,
	\begin{equation*}
		T_{1,2} := A + 3B \mp 2\sqrt{2B(A +B)}.
	\end{equation*}
This completes the proof of Theorem \ref{pro1}.
\end{proof}

As in the previous section we now deduce from Theorem \ref{pro1} 
some explicit sufficient conditions for the persistence of stability and stabilization by noise for problem \eqref{eq_s2}, 
using the explicit expression for the constant $C_\theta$ in Lemma \ref{TrIneq}. We remark again that Theorem \ref{pro1} is 
valid for any constant fulfilling inequality \eqref{func_ineq} in place of $C_\theta^*$.

\begin{theorem}\label{thm_2}
Let $C_\theta$ denote the explicit constant in Lemma \ref{TrIneq}, i.e. $C_\theta = \theta(d/R -\theta)$ for $\theta \leq d/2R$ and $C_\theta = d^2/(4R^2)$ for $\theta \geq d/2R$.
\begin{itemize}
	\item[(a)] Persistence of stability: \ If $\beta < C_\lambda$, then the 
	 trivial solution of 
	\eqref{eq_s2} is exponentially stable for all noise intensities $|\alpha|$ such that 
	\begin{equation*}
		\frac{\alpha^2}{2} \in [0, 8(\lambda - \wh\theta\,)),
	\end{equation*}
	where $\wh \theta < \lambda$ is the unique constant
		such that $\wh \theta+C_{\wh\theta}=\beta+\lambda$.

	\item[(b)] Stabilisation by noise I: \  If $\beta<\lambda$, then  problem \eqref{eq_s2} can be stabilised by noise. 
	More precisely, a range of noise intensities that stabilise the equation is given by 
	\begin{equation*}
		\frac{\alpha^2}{2} \in \begin{cases}
			[\beta ,T_2) &\text{ if } \  2\beta<\lambda,\\
			(T_1, T_2) &\text{ if } \  \beta<\lambda\leq 2\beta,
		\end{cases}
	\end{equation*}
	where
	\begin{equation*} %\label{T12}
		T_{1,2} = 3\lambda-\beta\mp2\sqrt{2\lambda(\lambda-\beta)}.
	\end{equation*}

	\item[(c)] Stabilisation by noise II: \   In case $\beta>\max\{\lambda, C_\lambda\}$, if
	\begin{equation}\label{up_beta}
		\beta \leq \frac{1}{4}(d/R-1)^2 + \lambda,
	\end{equation}
	and $d>R,$ then there exists $\theta >0$ such that
	\begin{equation}\label{cond_bl1}
		\lambda - \theta > \beta - C_\theta \geq 0,
	\end{equation}
	and consequently, the zero solution of 
	\eqref{eq_s2} is exponentially stable for all noise intensities $|\alpha|$ such that 
		\begin{equation*}
		\frac{\alpha^2}{2} \in \begin{cases}
			[\beta - C_\theta, T_2) &\text{ if } \  \lambda - \theta > 2(\beta - C_\theta),\\
			(T_1, T_2) &\text{ if } \  2(\beta - C_\theta) \geq \lambda - \theta > \beta - C_\theta,
		\end{cases}
	\end{equation*}
	where
	\begin{equation*} %\label{T12}
		T_{1,2} := C_\theta - \beta + 3(\lambda - \theta) \mp 2\sqrt{2(\lambda - \theta)(\lambda - \theta + C_\theta-\beta)}.
	\end{equation*}
\end{itemize}
\end{theorem}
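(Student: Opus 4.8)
The plan is to prove all three parts by selecting, in each case, a suitable value of $\theta$ and invoking Theorem~\ref{pro1}, using the explicit constant $C_\theta$ of Lemma~\ref{TrIneq} in place of the optimal constant $C_\theta^*$; this substitution is legitimate because $C_\theta$ also satisfies the Poincar\'e--Trace inequality \eqref{func_ineq}, and the proof of Theorem~\ref{pro1} uses only that property. Thus in each part it remains to exhibit a $\theta$ meeting the hypotheses of Theorem~\ref{pro1} and then to read off the resulting admissible range of $\alpha^2/2$ from the formulas for $T_{1,2}$ with $A = C_\theta - \beta$ and $B = \lambda - \theta$.

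For part (a) I would set $g(\theta) = \theta + C_\theta$ and observe that $g$ is continuous and strictly increasing on $[0,\infty)$ (on $[0,d/2R]$ one has $g'(\theta) = 1 + d/R - 2\theta \geq 1 > 0$, and $g$ is affine with slope $1$ beyond $d/2R$), with $g(0)=0$ and $g(\lambda) = \lambda + C_\lambda > \lambda + \beta$ by the assumption $\beta < C_\lambda$. The intermediate value theorem then yields a unique $\wh\theta \in (0,\lambda)$ with $\wh\theta + C_{\wh\theta} = \beta + \lambda$, i.e. $A := C_{\wh\theta} - \beta$ equals $B := \lambda - \wh\theta > 0$. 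Choosing $\theta = \wh\theta$ in Theorem~\ref{pro1} (its second alternative $\lambda > \theta$, $C_\theta \geq \beta$ holds), the coincidence $A=B$ collapses the upper endpoint to $T_2 = A + 3B + 2\sqrt{2B(A+B)} = 8B$, while $\beta - C_{\wh\theta} = -B < 0$ makes the lower endpoint $\max\{0,\beta - C_{\wh\theta}\} = 0$; this gives exactly $\alpha^2/2 \in [0, 8(\lambda - \wh\theta))$.

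For part (b) the key point is that \eqref{func_ineq} holds trivially at $\theta = 0$ with $C_0 = 0$, so the proof of Theorem~\ref{pro1} carries over verbatim and the theorem may be applied with $\theta = 0$. Then $A = -\beta$, $B = \lambda$, hypothesis \eqref{c1} becomes $\lambda > \beta > 0$, which is precisely the standing assumption $\beta < \lambda$, and substituting into the definition of $T_{1,2}$ produces $T_{1,2} = 3\lambda - \beta \mp 2\sqrt{2\lambda(\lambda - \beta)}$. The two sub-cases of Theorem~\ref{pro1}, according to whether $\lambda - \theta$ exceeds $2(\beta - C_\theta)$, reduce to whether $\lambda$ exceeds $2\beta$, and the lower endpoint equals $\max\{0,\beta - C_0\} = \beta$, reproducing the stated ranges.

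Part (c) is the substantial one: the reduction to Theorem~\ref{pro1} again yields the range formulas once a good $\theta$ is found, so the whole difficulty lies in the existence of $\theta$ satisfying \eqref{cond_bl1}. Note first that $\lambda - \theta > \beta - C_\theta \geq 0$ forces both $0 < \theta < \lambda$ and $C_\theta \leq \beta$. I would search within $\theta \leq d/2R$, where $C_\theta = \theta(d/R - \theta)$, so that the inequality $\lambda - \theta > \beta - C_\theta$ becomes the quadratic condition $\theta^2 - (d/R - 1)\theta + (\beta - \lambda) < 0$, whose discriminant $(d/R-1)^2 - 4(\beta - \lambda)$ is nonnegative precisely under \eqref{up_beta}, while $d > R$ gives $d/R - 1 > 0$ so the roots $\theta_\pm$ are real and positive with $\theta_- < d/2R$. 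The main obstacle is then to show that the window $(\theta_-,\theta_+)$ genuinely meets the region $\{0 < \theta < \lambda,\ C_\theta \leq \beta\}$; this non-emptiness is delicate, since $\beta > \max\{\lambda, C_\lambda\}$ pushes the peak of $\theta \mapsto C_\theta - \theta$ (attained at $\theta = (d/R-1)/2$) toward or past $\lambda$, and it is exactly here that the geometric hypotheses on $d$, $R$ and $\lambda$ must be exploited, in the spirit of the non-emptiness argument in the proof of Theorem~\ref{main}(b). Once such a $\theta$ is secured, Theorem~\ref{pro1} with $A = C_\theta - \beta$ and $B = \lambda - \theta$ delivers the claimed intervals $[\beta - C_\theta, T_2)$ or $(T_1, T_2)$ according to whether $\lambda - \theta$ exceeds $2(\beta - C_\theta)$.
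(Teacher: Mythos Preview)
Your approach is exactly that of the paper: in each part you apply Theorem~\ref{pro1} with the explicit constant $C_\theta$ in place of $C_\theta^*$ for a judicious choice of $\theta$. Parts~(a) and~(b) are complete and match the paper's argument (the paper phrases~(b) as a limit $\theta\to 0^+$, but your direct choice $\theta=0$ is cleaner and equally valid since $C_0=0$).

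In part~(c) you correctly reduce the existence of a good $\theta\le d/2R$ to the quadratic inequality $\theta^2-(d/R-1)\theta+(\beta-\lambda)<0$ and identify that the remaining task is to show that $(\theta_-,\theta_+)$ meets $(0,\min\{\lambda,d/2R\})$, but you stop short of carrying this out. This step is in fact short and is precisely where the hypothesis $\beta>C_\lambda$ enters: since you already have $0<\theta_-<d/2R$, it suffices to verify $\theta_-<\lambda$, i.e.\ $d/R-1-2\lambda<\sqrt{\Phi}$ with $\Phi=(d/R-1)^2-4(\beta-\lambda)$. If $\lambda>\tfrac12(d/R-1)$ the left-hand side is negative and we are done; otherwise both sides are nonnegative and squaring gives the equivalent condition $\beta>\lambda(d/R-\lambda)=C_\lambda$, which is part of the standing assumption $\beta>\max\{\lambda,C_\lambda\}$. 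Finally, the extra constraint $C_\theta\le\beta$ you flag is automatic once $\theta<\min\{\lambda,d/2R\}$: since $C_\theta$ is nondecreasing on $[0,d/2R]$ and constant beyond, such a $\theta$ satisfies $C_\theta\le C_\lambda<\beta$. With this, part~(c) is complete along the same lines as the paper.
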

\begin{proof}
	To prove part (a) we observe that $T_1=0$ in Theorem \ref{pro1} if and only if $C_\theta+\theta=\lambda+\beta$. 
	Moreover, if $\beta<C_\lambda$, then $\lambda+\beta=\lambda+C_\lambda-\varepsilon$ for some $\varepsilon>0$, and 
	since $\theta+C_\theta$ is strictly increasing, there exists a unique $\wh\theta<\lambda$ such that 
	$\wh\theta+C_{\wh\theta}=\lambda+C_\lambda-\varepsilon=\beta+\lambda$. In this case, we obtain $T_2= 8(\lambda - \wh\theta)$
	in Theorem \ref{pro1},
	which implies the stated range of noise intensities.
	
	\medskip
	Part (b): If $\beta<\lambda$  we can choose $\theta$ in Theorem \ref{pro1} arbitrarily small such that \eqref{c1} holds, since $C_0^* = 0$ and $C_\theta^*$ depends continuously on $\theta$. 
	Moreover, we observe that 
	$$
	T_{1,2}|_{\theta=0}=3\lambda-\beta\mp\sqrt{2\lambda(\lambda-\beta)},
	$$
	and the statement is a direct consequence of Theorem \ref{pro1}.
	
	\medskip
	For Part (c), we first observe by direct computation that 
	$$\max\{\lambda, C_\lambda\} < \frac 14(d/R-1)^2 + \lambda,$$
	thus, the set of parameters $\beta$ satisfying $\max\{\lambda, C_\lambda \}< \beta < (1/4)(d/R - 1)^2 + \lambda$ 
	is not empty. We now prove that  \eqref{up_beta} implies that  there exists $\theta \leq d/2R$ 
	such that \eqref{cond_bl1} holds. 
	The claim then follows from Theorem \ref{pro1}.
	 If $\theta \leq d/2R$, then  $C_\theta = \theta(d/R - \theta)$, and condition \eqref{cond_bl1} is equivalent to
	\begin{equation}\label{in_sys}
		\begin{cases}
			\theta < \min\{d/2R,\lambda\},\\
			\theta^2 - (d/R - 1)\theta + \beta - \lambda \leq 0.
		\end{cases}
	\end{equation}
	Moreover, the second inequality in \eqref{in_sys} is equivalently to
	\begin{equation*}
		\frac{d/R-1-\sqrt{\Phi}}{2} \leq \theta \leq \frac{d/R-1+\sqrt{\Phi}}{2},
	\end{equation*}
	where	
	\begin{equation*}
		\Phi:= (d/R - 1)^2 - 4(\beta - \lambda) \geq 0
	\end{equation*}
	thanks to \eqref{up_beta}. Note that we also need $R>d$ for the two bounds for $\theta$ to be positive. It now suffices to check that
	\begin{equation*}
		\frac{d/R - 1 - \sqrt{\Phi}}{2} < \min\{d/2R, \lambda\}.
	\end{equation*}
	The inequality $\frac{d/R-1-\sqrt{\Phi}}{2} < d/2R$ is obviously satisfied. On the other hand, we observe that 
	\begin{equation*}
		\frac{d/R - 1 - \sqrt{\Phi}}{2} < \lambda\quad \Longleftrightarrow\quad d/R - 1 - 2\lambda < \sqrt{\Phi},
	\end{equation*}
	which certainly holds if $\lambda > \frac 12(d/R -1)$. 
	If $\lambda \leq \frac 12(d/R - 1)$ taking the square of both sides of the inequality leads to 
	$\beta > \lambda(d/R - \lambda)$, which is satisfied since we have $\beta > \max\{\lambda; C_\lambda\}$.
	
	In conclusion, under the condition $\max\{\lambda, C_\lambda \} < \beta <\frac{1}{4}(d/R - 1)^2 + \lambda$, there exists $0 < \theta \leq d/2R$ such that \eqref{cond_bl1} holds. Therefore, applying Theorem \ref{pro1} we obtain Part (c).
\end{proof}

\medskip
\noindent{\bf Discussion.} 
\begin{itemize}
	\item
The results in Theorem \ref{thm_2} can be interpreted as follows:
\begin{itemize}
	\item If $\beta < C_\lambda \leq C_\lambda^*$  the trivial steady state of the 
	deterministic equation is exponentially stable by Theorem \ref{determin}. Theorem \ref{thm_2} proves that this stability is preserved 
	for the perturbed problem \eqref{eq_s2} if the  
	noise intensity $|\alpha|$ is such that  $\alpha^2/2 \in [0,8( \lambda-\wh\theta\,))$.
	\item If $\beta \geq C_\lambda$, the zero solution of the deterministic equation might be unstable. In particular, if $\beta > C_\lambda^*$, then 
	the zero steady state is unstable by Theorem \ref{determin}.
	Theorem \ref{thm_2} shows that it is possible to stabilise the equation by noise inside the domain
	 if either (by part (b)) $\beta < \lambda$ or (by part (c))
	 $$\max\{\lambda; C_\lambda\} < \beta \leq \frac 14(d/R - 1)^2 + \lambda.$$
\end{itemize}
	\item 
Remarkably different from the case of noise acting on the boundary \eqref{eq}, where we conjecture that there exists a critical 
value $\beta_{\mathrm{crit}}$ such that the equation cannot be stabilised 
if $\beta \geq \beta_{\mathrm{crit}}$, Theorem \ref{thm_2} implies that
if $\lambda > \beta$ then, no matter how large $\beta$ is, there always exists a range of noise intensities 
that stabilise the equation \eqref{eq_s2}.

\item In Theorem \ref{thm_2}, as well as in Theorem \ref{main},
we obtain a finite range of noise intensities that stabilise the equation. 
This indicates that the dynamical boundary conditions cause difficulties when trying to stabilise the deterministic 
problem by noise (acting either inside the domain or on the boundary) with too high intensities. 
\end{itemize}

\section{The case of one dimension}\label{sec:1D}

As shown in Section \ref{sec:Deterministic}, the zero solution of the deterministic equation 
\eqref{DeEq} is exponentially stable if $C_\lambda^* > \beta$, while instability occurs when $C_{\lambda}^* < \beta$. 
However, since $C_\lambda^*$ is not explicit, the latter case is not easy to verify in practice.
In this section, we analyse system \eqref{DeEq} in one dimension. 
Firstly, we consider the linearised equation around zero, which can be solved explicitly by
using separation of variables. For a particular choice of parameters, it follows that the trivial 
solution of the linearised problem is unstable. Since all eigenvalues of the stationary problem have non-zero real part, we can apply an infinite dimensional version of the Hartman-Grobman theorem, see e.g. \cite{Lu91} or \cite[Corollary 5.1.6]{Hen81}, to conclude the instability of the nonlinear problem \eqref{DeEq}. 
Finally, we apply Theorem \ref{main}
to prove that the equation can be stabilised by noise on the boundary and 
determine a concrete range of noise intensities that imply the exponential stability of the zero steady state.

In the one-dimensional domain $D=(0,1)$ we consider the following linearisation of \eqref{DeEq} 
\begin{equation}\label{1d-equa}
	\begin{cases}
		u_t(x,t) - u_{xx}(x,t) - \beta u(x,t) = 0, &(x,t)\in (0,1)\times (0,\infty),\\
		u_t(0,t) - u_x(0,t) + \lambda u(0,t) = 0, &t\in (0,\infty),\\
		u_t(1,t) + u_x(1,t) + \lambda u(1,t) = 0, &t\in (0,\infty),\\
		u(x,0) = u_0(x), &x\in (0,1),\\
		u(0,0)=\phi_1,\quad u(1,0)=\phi_2,
	\end{cases}
\end{equation}
where $u_0 \in L^2(0,1)$ and $\phi_1,\phi_2\in\R$. 
Defining $\wh{u}(x,t)=e^{-\beta t}u(x,t)$ we observe that $\wh{u}$ satisfies the system 
\begin{equation}\label{1d-eq-new}
\begin{cases}
	 \wh{u}_t - \wh{u}_{xx} = 0,&(x,t)\in (0,1)\times (0,\infty),\\
	 \wh{u}_t(0,t) - \wh{u}_{x}(0,t) + (\beta + \lambda)\wh{u}(0,t) = 0, &t\in (0,\infty),\\
	\wh{u}_t(1,t) + \wh{u}_{x}(1,t) + (\beta + \lambda)\wh{u}(1,t) = 0, &t\in (0,\infty),\\
	\wh u{(x,0)}  = u_0(x),&x\in (0,1),\\
	\wh{u}(0,0) = \phi_1, \quad \wh{u}(1,0) = \phi_2.
\end{cases}
\end{equation}

\medskip
%Using separation of variables, we can explicitly solve the equation. 
We will solve \eqref{1d-eq-new} explicitly using separation of variables.
We are looking for solutions of the form
\begin{equation*}
	\wh u(x,t) = X(x)T(t),
\end{equation*}
and inserting this ansatz into  \eqref{1d-eq-new} leads to
\begin{equation*}
	\frac{T'}{T}= \frac{X''}{X} = k
\end{equation*}
for some constant $k\in \mathbb R$. This immediately implies that
\begin{equation*}
	T(t) = c_0e^{kt},
\end{equation*}
for some constant $c_0\neq 0.$
To determine $X$, we distinguish three cases.

\medskip
\noindent{\bf Case 1:} $k>0$. From $X'' = kX$ we obtain
	\begin{equation*}
		X(x) = c_1e^{\sqrt k x} + c_2e^{- \sqrt k x},
	\end{equation*}
	for some constants $c_1, c_2 \in \mathbb R$. Consequently,
	\begin{equation*}
		\wh u(x,t) = c_3e^{kt}\left(e^{\sqrt k x} + c_4e^{-\sqrt k x}\right),
	\end{equation*}
	for some constants $c_3, c_4 \in \mathbb R$, where $c_3 \not=0$. The dynamical boundary conditions in \eqref{1d-equa} now lead to
	\begin{align*}
			(\beta + k + \lambda + \sqrt k )c_2 &= \sqrt k - \beta - k - \lambda,\\
			(\beta + k + \sqrt{k} + \lambda)e^{\sqrt k } &= c_2(\sqrt k - \beta - k - \lambda)e^{-\sqrt{k}},
	\end{align*}
	and it follows that 
	\begin{equation*}
		(\beta + k + \lambda + \sqrt{k})^2e^{\sqrt k } = (\beta + k + \lambda - \sqrt{k})^2e^{-\sqrt k}.
	\end{equation*}
	This is impossible since $\beta, \lambda$ and $k$ are positive.
	
\medskip
\noindent{\bf Case 2:} $k=0$. In this case we obtain  
	\begin{equation*}
		T(t) = c_0 \quad \text{ and } \quad X(x) = c_1x + c_2,
	\end{equation*}
	for some constants $c_0,c_1,c_2\in\R.$
	Using again the boundary conditions it follows that 
	\begin{align*}
			(\beta + \lambda) c_2&= c_1,\\
			\beta(c_1+ c_2) +c_1 + \lambda(c_1+ c_2) &= 0,
	\end{align*}
	which implies that $c_1 =  c_2 = 0$, and consequently, $u \equiv 0$.
	
\medskip
\noindent{\bf Case 3:} $k < 0$. We set $k = -\mu^2$, where $\mu \in (0,\infty)$. From $X'' = -\mu^2 X$ we obtain 
	\begin{equation*}
		X(x) = c_1\cos(\mu x) + c_2\sin(\mu x),
	\end{equation*}
	for some constants $c_1,c_2\in\R,$ and thus, 
	$$
		\wh u(x,t) = c_0e^{- \mu^2t}(c_1\cos(\mu x) + c_2\sin(\mu x)).
	$$
	Using the dynamical boundary conditions it follows that 
	\begin{align*}
			(\beta + \lambda - \mu^2)c_1 - \mu c_2 &= 0,\\
			[(\beta + \lambda - \mu^2)\cos \mu - \mu \sin \mu]c_1 + [\mu \cos \mu + (\beta + \lambda - \mu^2)\sin \mu]c_2&= 0.
	\end{align*}
	For the existence of a nontrivial solution $(c_1, c_2)$ it is necessary that 
	\begin{equation*}
		\det\begin{bmatrix}
			(\beta + \lambda - \mu^2) & -\mu\\
			(\beta + \lambda - \mu^2)\cos \mu - \mu \sin \mu & \mu \cos \mu + (\beta + \lambda - \mu^2)\sin \mu
		\end{bmatrix} = 0.
	\end{equation*}
	Hence, if $\cos \mu =0$, i.e. $\mu=\frac{\pi}{2} + k\pi$ for $k\in\N$, then $\mu^2 = (\beta + \lambda - \mu^2)^2$, which yields two 
	positive roots $\mu = \pm\frac12 + \sqrt{\frac14+ \beta+\lambda}>0$, which require 
	$\beta+\lambda = -\frac14 + (\pm \frac12 + \frac{\pi}{2} + k\pi)^2$ for some $k\in\N$. Otherwise, if  
	$\cos \mu \neq 0$ then
	\begin{equation}\label{mu-eq}
		\tan \mu = \frac{2\mu^3 - 2(\beta + \lambda)\mu}{\mu^4 - [2(\beta + \lambda)+1]\mu^2 + (\beta + \lambda)^2}.
	\end{equation}
	It is easy to see that this equation possesses countably infinitely many positive solutions 
	$$0< \mu_1 \leq \mu_2 \leq \ldots \leq \mu_n\leq \dots,\qquad \lim_{n\to\infty} \mu_n= \infty.$$

\medskip
In conclusion, the solution of \eqref{1d-eq-new} can be written as
	\begin{equation*}
		\begin{aligned}
		\wh u(x,t) &= \sum_{n=1}^{\infty}c_{0,n}e^{- \mu_n^2t}\left(c_{1,n}\cos(\mu_nx) + c_{2,n}\sin(\mu_nx)\right)\\
		&= \sum_{n=1}^{\infty}c_{n}e^{ - \mu_n^2t}\left(\cos(\mu_nx) + \frac{\beta + \lambda - \mu_n^2}{\mu_n}\sin(\mu_nx)\right),
		\end{aligned}
	\end{equation*}
	where the coefficients $c_n$ are determined by the initial data. 
	Finally, changing back to $u(x,t)$ we obtain the explicit solution to \eqref{1d-eq-new}
	\begin{equation*}
		u(x,t) = \sum_{n=1}^{\infty}\frac{\langle U_0, \varphi_n\rangle}{\|\varphi_n\|^2_H} e^{(\beta - \mu_n^2)t}\varphi_n(x),
	\end{equation*}
	where $U_0=(u_0,(\phi_1,\phi_2))$ and
	$$
	\varphi_n(x)=\cos(\mu_nx) + \frac{\beta + \lambda - \mu_n^2}{\mu_n}\sin(\mu_nx).
	$$
	From this, we observe that the zero solution of \eqref{1d-equa} is unstable if the parameters $\beta$ and $\lambda$ are such that
	\begin{equation}\label{instab}
		\beta > \mu_1^2.
	\end{equation}

	\medskip
	If $\beta + \lambda \ll 1$, we can use an asymptotic analysis to determine an explicit condition for instability.
	\begin{proposition}[Instability for small $\beta + \lambda$]\label{unstable} There exists $\varepsilon>0$ small enough such that, if $\beta + \lambda \leq \varepsilon$ and $\beta > 2\lambda$, then the zero steady state of \eqref{1d-equa} is unstable.
	\end{proposition}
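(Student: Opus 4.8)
The plan is to invoke the instability criterion \eqref{instab}: the zero steady state of \eqref{1d-equa} is unstable as soon as $\beta > \mu_1^2$, where $\mu_1$ is the smallest positive root of the characteristic equation \eqref{mu-eq}. The crucial structural observation is that \eqref{mu-eq} depends on $\beta$ and $\lambda$ only through the sum $s := \beta + \lambda$, so $\mu_1 = \mu_1(s)$ is a function of $s$ alone. Writing $y = \mu^2$ and $\tan\mu/\mu = g(y)$ with $g(y) = \tan(\sqrt y)/\sqrt y = 1 + \tfrac{y}{3} + O(y^2)$ as $y\to 0$, I would clear denominators in \eqref{mu-eq} and reduce the search for the smallest root to finding the smallest positive zero $y_1 = \mu_1^2$ of
\begin{equation*}
F(y) := g(y)\big(y^2 - (2s+1)y + s^2\big) - 2(y - s).
\end{equation*}
For $\mu \in (0,\tfrac{\pi}{2})$ (which contains $\mu_1$ once $s$ is small, since $\cos\mu>0$ there) one checks the identity $F(\mu^2) = \tilde G(\mu)/\mu$, where $\tilde G$ is the determinant condition from \eqref{mu-eq} divided by $\cos\mu$. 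This guarantees that zeros of $F$ on $(0,\tfrac{\pi^2}{4})$ are in exact correspondence with eigenvalue roots, so that no spurious roots are introduced at the zeros of the denominator in \eqref{mu-eq}.

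The heart of the argument is the behaviour of $\mu_1(s)$ as $s\to 0$. Inserting the ansatz $y = a_1 s + a_2 s^2 + \dots$ into $F(y)=0$ and matching orders yields $a_1 = \tfrac{2}{3}$ together with a strictly negative second-order coefficient, namely
\begin{equation*}
\mu_1^2 = \tfrac{2}{3}s - \tfrac{1}{81}s^2 + O(s^3) < \tfrac{2}{3}s \qquad \text{for all sufficiently small } s>0.
\end{equation*}
The decisive point is then purely algebraic: the hypothesis $\beta > 2\lambda$ is \emph{equivalent} to $3\beta > 2(\beta+\lambda)$, i.e. to $\beta > \tfrac{2}{3}s$. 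Hence, once $\mu_1^2 < \tfrac23 s$ is established, we obtain $\mu_1^2 < \tfrac23 s < \beta$, which is precisely the instability condition \eqref{instab}. The constant $\tfrac23$ and the threshold $\beta = 2\lambda$ are thus not coincidental but arise from the same leading-order balance.

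To make this rigorous and uniform over all admissible $(\beta,\lambda)$ with $s\le\varepsilon$, I would avoid relying on the formal expansion and instead argue by the intermediate value theorem on the fixed interval $[0,\tfrac23 s]$. A direct evaluation gives $F(0) = s^2 + 2s > 0$, while using $g(\tfrac23 s) = 1 + \tfrac{2s}{9} + O(s^2)$ and the exact value $(\tfrac23 s)^2 - (2s+1)\tfrac23 s + s^2 = -\tfrac{2s}{3} + \tfrac{s^2}{9}$ produces
\begin{equation*}
F\big(\tfrac23 s\big) = -\tfrac{s^2}{27} + O(s^3).
\end{equation*}
Choosing $\varepsilon$ small enough that the $O(s^3)$ remainder (controlled by the analytic Taylor remainder of $g$ on $[0,\tfrac23\varepsilon]$) is dominated by $\tfrac{s^2}{27}$ for every $s\le\varepsilon$, we get $F(\tfrac23 s) < 0$, so $F$ has a zero in $(0,\tfrac23 s)$ and the smallest positive root satisfies $\mu_1^2 < \tfrac23 s < \beta$. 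I expect the main obstacle to be exactly this uniform remainder control and the accompanying sign bookkeeping: the leading $-\tfrac{2s}{3}$ terms cancel identically against $2(y-s)$, so the whole conclusion rests on the subleading $O(s^2)$ contribution, and one must track it together with a bound on the Taylor remainder of $g$ to ensure it is genuinely negative for all $s\in(0,\varepsilon]$ rather than only asymptotically.
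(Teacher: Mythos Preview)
Your approach is essentially the same as the paper's: both identify that the eigenvalue equation depends only on $s=\beta+\lambda$, derive the leading asymptotics $\mu_1^2\sim\tfrac{2}{3}s$ as $s\to 0$, and then observe that $\beta>2\lambda$ is exactly the condition $\beta>\tfrac{2}{3}s$, so that $\mu_1^2<\tfrac{2}{3}s<\beta$ gives instability via \eqref{instab}. The paper carries this out by a formal two-term expansion $\mu^2=\gamma_0 s+\gamma_1 s^2+O(s^3)$ with $\gamma_0=\tfrac23$ and $\gamma_1<0$; you instead make the inequality $\mu_1^2<\tfrac23 s$ rigorous by an intermediate-value argument on $F$, checking $F(0)>0$ and $F(\tfrac23 s)=-\tfrac{s^2}{27}+O(s^3)<0$ uniformly for small $s$. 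This is a genuine (if modest) improvement in rigor, since the paper does not justify the validity of its asymptotic expansion.

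One incidental remark: your formal coefficient $-\tfrac{1}{81}$ differs from the paper's stated $-\tfrac{1}{27}$. Your value is the correct one---it is consistent with your own computation $F(\tfrac23 s)=-\tfrac{s^2}{27}+O(s^3)$ together with $F'(\tfrac23 s)=-3+O(s)$, and the paper's derivation has a minor slip (the denominator in the expression for $f'(0)$ should be $\gamma_0^2$ rather than $\gamma_0$). Since only the sign of the second-order term matters, this does not affect either argument.
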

	\begin{proof}
		For simplicity we set $b = \beta + \lambda$. Note that when $b \to 0$ the smallest positive solution of \eqref{mu-eq} also converges to $0$. Therefore, for small $b$, $\mu_1$ is expected to be close to $0$. In a general asymptotic expansion $\mu^2 = \gamma b^\sigma$ of \eqref{mu-eq}, direct computations verify that only the exponent $\sigma =1$ leads to a significant degeneration. Thus, by using the ansatz $\mu^2 = \gamma b$ and a Taylor expansion for the function $\tan\mu = \mu + \mu^3/3 + O(\mu^5)$ we obtain from \eqref{mu-eq}
		\begin{equation}\label{b-eq}
			1 + \frac{\gamma b}{3} + O(b^2) = \frac{2(\gamma - 1)}{b(\gamma-1)^2 - \gamma}.
		\end{equation} 
		By expanding $\gamma = \gamma_0 + \gamma_1b + O(b^2)$, the left hand side of \eqref{b-eq} is $1 + \gamma_0b/3 + O(b^2)$. Let $f(b)$ denote the right hand side of \eqref{b-eq}, then a Taylor expansion yields
		\begin{equation*}
			f(b) = f(0) + f'(0)b + O(b^2) = \frac{2(1-\gamma_0)}{\gamma_0} - \frac{2\gamma_1+2(\gamma_0-1)^3}{\gamma_0}b + O(b^2).
		\end{equation*}
		By identifying the zero and the first order terms of $b$ in \eqref{b-eq}, it follows that
		\begin{equation*}
			1 = \frac{2(1-\gamma_0)}{\gamma_0} \quad \text{ and } \quad \frac{\gamma_0}{3} = -\frac{2\gamma_1+2(\gamma_0-1)^3}{\gamma_0}
		\end{equation*}
		which implies that $\gamma_0 = 2/3$ and $\gamma_1 = -1/27$. Therefore, asymptotically we have
		\begin{equation*}
			\mu_1^2 = \frac{2}{3}b - \frac{1}{27}b^2 + O(b^3) = \frac{2}{3}(\beta + \lambda) - \frac{1}{27}(\beta+\lambda)^2 + O((\beta+\lambda)^3).
		\end{equation*}
		It follows that, when $\beta + \lambda$ is small enough, we have $\mu_1^2 < \frac{2}{3}(\beta + \lambda) < \beta$ since $\beta > 2\lambda$. This confirms the instability of the zero solution of \eqref{1d-equa} due to \eqref{instab}.
	\end{proof}
	\medskip
	In the next proposition we formulate a concrete example where the deterministic equation is unstable and by applying Theorem \ref{main} we derive an explicit range of noise intensities on the boundary which stabilise the equation.
	\begin{proposition}
		We consider the equation \eqref{1d-equa} with $\beta = 0.02$, $\lambda = 0.001$. 
		Then, the zero solution is unstable, but can be stabilised by a multiplicative It\^ o noise on the boundary. 
		
		In particular, the zero steady state of the stochastic problem 
		\begin{equation}\label{1d-equa-noise}
			\begin{cases}
			u_t(x,t) - u_{xx}(x,t) + u^3(x,t) - \beta u(x,t) = 0, &(x,t)\in (0,1)\times (0,\infty),\\
			u_t(0,t) - u_x(0,t) + \lambda u(0,t) =\alpha u(0,t) dW_t, &t\in (0,\infty),\\
			u_t(1,t) + u_x(1,t) + \lambda u(1,t) =\alpha u(1,t)dW_t, &t\in (0,\infty),\\
			u(x,0) = u_0(x), &x\in (0,1),\\
		u(0,0)=\phi_1,\quad u(1,0)=\phi_2
			\end{cases}
		\end{equation}
		is exponentially stable for all $\alpha$ such that 
		\begin{equation}\label{alpha_range}
			0.0556 < \alpha^2 <6.274.
		\end{equation}
%		$\alpha_1^2 < \alpha^2/2 < \alpha_2^2$ where $\alpha_{1,2}$ are defined in \eqref{alpha_12} and $C_\theta = \theta(2 - \theta)$, with 
%		\begin{equation*}
%			0.119 < \theta < 0.98.
%		\end{equation*}
	\end{proposition}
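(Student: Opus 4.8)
The plan is to establish the two assertions---instability of the deterministic problem and stabilisation by boundary noise---separately, exploiting that in $D=(0,1)$ one has $d=1$ and $R=\tfrac12\mathrm{diam}(D)=\tfrac12$, so $d/R=2$, $d/(2R)=1$, and the explicit constant of Lemma \ref{TrIneq} becomes $C_\theta=\theta(2-\theta)$ for all admissible $\theta\le 1$.

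For the instability I would verify the criterion \eqref{instab}, i.e. $\beta>\mu_1^2$. Since $\beta=0.02>0.002=2\lambda$ and $b:=\beta+\lambda=0.021$ is small, Proposition \ref{unstable} already suggests instability; its asymptotic expansion gives $\mu_1^2=\tfrac23 b-\tfrac1{27}b^2+O(b^3)\approx 0.01398<0.02=\beta$. To avoid relying on the unspecified threshold $\varepsilon$ in Proposition \ref{unstable}, I would instead substitute $\beta+\lambda=0.021$ into the characteristic equation \eqref{mu-eq} and solve numerically for its smallest positive root, obtaining $\mu_1\approx 0.118$ and hence $\mu_1^2\approx 0.014<\beta$. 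By \eqref{instab} the zero solution of the linearised problem \eqref{1d-equa} is unstable, and the infinite-dimensional Hartman-Grobman theorem used in Theorem \ref{determin} transfers this instability to the nonlinear deterministic equation.

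For the stabilisation I would first check that we are in case (b) of Theorem \ref{main}: indeed $C_\lambda=\lambda(2-\lambda)=0.001999<\beta$, while \eqref{cond_bl} holds since $\lambda=0.001<\tfrac12=\tfrac12(d/R-1)$ and $\beta=0.02<0.251=\tfrac14(d/R-1)^2+\lambda$. The admissible parameters $\theta$ are those satisfying \eqref{cond} with $C_\theta=\theta(2-\theta)$, i.e. $\theta^2-\theta+(\beta-\lambda)<0$ together with $\lambda<\theta\le d/(2R)$; solving the quadratic gives $\theta\in(\theta_-,\theta_+)$ with $\theta_\mp=\tfrac12\bigl(1\mp\sqrt{0.924}\bigr)$, that is $\theta\in(0.0194,0.9806)$. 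Writing $A=C_\theta-\beta=\theta(2-\theta)-0.02$ and $B=\theta-\lambda=\theta-0.001$, one checks $A-2B=-\theta^2-0.018<0$, so we are always in the second case of Theorem \ref{main}(b), and for each such $\theta$ the stabilising set is $\tfrac{\alpha^2}{2}\in(Z_1(\theta),Z_2(\theta))$ with $Z_{1,2}(\theta)=3A-B\mp 2\sqrt{2A(A-B)}$.

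Since Theorem \ref{main}(b) applies for \emph{every} admissible $\theta$, the solution is stable whenever $\tfrac{\alpha^2}{2}$ lies in the union $\bigcup_{\theta}(Z_1(\theta),Z_2(\theta))$. This union is a single interval: the region $\{(\theta,z):\theta_-<\theta<\theta_+,\ Z_1(\theta)<z<Z_2(\theta)\}$ lies between two continuous graphs over a connected base, hence is connected, so its projection onto the $z$-axis is the interval $(\inf_\theta Z_1,\sup_\theta Z_2)$. The remaining---and principal---step is therefore the scalar optimisation of $Z_1$ and $Z_2$ over $(\theta_-,\theta_+)$: numerically $Z_1$ is minimised near $\theta\approx 0.0225$ with value $\approx 0.0278$, while $Z_2$ is maximised near $\theta\approx 0.7$ with value $\approx 3.137$. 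This yields $\tfrac{\alpha^2}{2}\in(0.0278,3.137)$, i.e. $0.0556<\alpha^2<6.274$, as claimed. I expect this optimisation to be the main obstacle, since $Z_{1,2}$ are non-monotone in $\theta$ and their extrema are attained at different interior points; a fully rigorous argument would differentiate $Z_{1,2}$ and solve the resulting stationarity equations, or supply certified numerical enclosures of the two extremal values.
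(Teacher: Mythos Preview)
Your proposal is correct and follows essentially the same route as the paper: instability via Proposition~\ref{unstable} (and \eqref{instab}) together with Hartman--Grobman, and stabilisation via Theorem~\ref{main}(b) with the explicit $C_\theta=\theta(2-\theta)$, yielding the same admissible interval $\theta\in(0.0194,0.9806)$ and the same numerical extrema $\min_\theta Z_1\approx 0.0278$, $\max_\theta Z_2\approx 3.137$. Your write-up is in fact slightly more careful than the paper's in three places: you verify $\mu_1^2<\beta$ numerically rather than relying on the unspecified threshold $\varepsilon$ in Proposition~\ref{unstable}; you check explicitly that $A-2B=-\theta^2-0.018<0$ so that the $(Z_1,Z_2)$ branch of Theorem~\ref{main}(b) always applies; and you supply a connectedness argument to justify that $\bigcup_\theta (Z_1(\theta),Z_2(\theta))=(\inf_\theta Z_1,\sup_\theta Z_2)$, a step the paper leaves implicit.
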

	\begin{proof}
		Thanks to Proposition \ref{unstable}, the zero solution of 
		the linearised equation for \eqref{1d-equa-noise} is unstable. Since all the eigenvalues 
		of the operator $A$ are positive, we can apply an infinite dimensional version of the 
		Hartman-Grobman theorem, see e.g. \cite{Lu91} or \cite[Corollary 5.1.6]{Hen81}, to 
		conclude that the zero solution of the nonlinear problem \eqref{1d-equa-noise} is also unstable.
		
		To prove the second statement of the proposition, we apply Theorem \ref{main}. 
		We need to find $\theta>0$ such that
		\begin{align*}
		%		C_\theta &> \beta\\
				C_\theta - \beta&> \theta - \lambda > 0.
		\end{align*}
		Since $D = (0,1)$ we have $R=1/2$ and $d/2R = 1$, and consequently, by Lemma \ref{TrIneq},  
		$$
		C_\theta = \begin{cases}
		\theta(2-\theta) & \theta < 1\\ 1 &\theta \geq 1.
		\end{cases}
		$$ 
		This leads to the conditions
		\begin{align*}
		\theta&>\lambda = 0.001,\\
				%\theta^2 - 2\theta + 0.02 &< 0\\
				0&>\theta^2 - \theta + 0.019,
		\end{align*}
		and solving the inequality we obtain
		\begin{equation*}
			0.0194 < \theta < 0.9806.
		\end{equation*}
		By Theorem \ref{main} for any $\theta$ within this range, 
		the zero solution of equation \eqref{1d-equa-noise} is exponentially stable 
		if the noise intensity is such that  $Z_1 < \alpha^2/2 < Z_2$, where
		\begin{equation*}
			Z_{1,2}(\theta) = -3\theta^2 + 5\theta - 0.059 \mp 2\sqrt{2(\theta^2-2\theta+0.02)(\theta^2-\theta+0.019)}.
		\end{equation*}
		We observe that 
		\begin{equation*}
			\min_{\theta \in (0.0194, 0.9806)}\{Z_1(\theta)\} = 0.0278 \qquad \text{ and }
			\qquad  \max_{\theta \in (0.0194, 0.9806)}\{Z_2(\theta)\} =3.137,
		\end{equation*}
		and thus, obtain the desired range of intensities \eqref{alpha_range}.
	\end{proof}

\section{Conclusion}\label{sec:concl}
This paper studies the stabilisation effect of noise on the boundary for a Chafee-Infante equation with dynamical boundary conditions. Under certain conditions of the domain and the parameters, we show that with suitable boundary noise one can stabilise the trivial stationary state, which is unstable in the deterministic case, i.e. without noise. The main tools are the refinements of the ideas in \cite{CLM01} and the functional inequality: for each $\theta>0$ there exists an optimal constant $C_\theta^*>0$ such that
\begin{equation*}
	\int_D|\nabla u(x)|^2dx + \theta\int_D |u(x)|^2dS(x) \geq C_\theta^*\int_D|u(x)|^2dx \quad \text{ for all } u\in H^1(D).
\end{equation*}
A main difference to related results in the literature on stabilisation by noise (within a domain) is the fact that 
we prove a \emph{finite amplitude range of stabilising noise}.
%The obtained results differ from literature concerning stabilisation by noise in the sense that the range of stabilising noise intensities is {\it finite}. 
It is however an open question whether such results are due to technical limitation or the nature of stabilisation by boundary noise. However, Theorem~\ref{thm_2} seems to indicate that the finite range of stabilisation might be due to the nature of dynamical boundary conditions. 

Up to the best of our knowledge, this work proves the first results concerning the question of stabilisation for partial differential equations using boundary noise. The Chafee-Infante equation and the dynamical boundary condition are chosen as a specific example to present the main ideas. As future research, we expect that the results of this paper generalise e.g. to Dirichlet or Neumann boundary conditions.

\medskip
\noindent{\bf Acknowledgements.} 
 We would like to thank Tom\'as Caraballo for valuable comments and remarks, and
 Max Winkler and Quoc-Hung Nguyen for useful discussions 
concerning the Poincar\'e-Trace Inequality \eqref{crucial_ineq}.

This work was initiated during the visit of the forth author to the University of Graz, 
the University's hospitality is greatly acknowledged.

Our research was supported by the ASEAN-European Academic University Network (ASEA-UNINET), and partially supported by NAWI Graz, IGDK 1754, and NAFOSTED project 101.01-2017.302.

\end{document}